\newcommand{\subparagraph}{}
\newtheorem{thm}{Theorem}
\newtheorem{lem}{Lemma}
\newtheorem{prop}{Proposition}
\newtheorem{cor}{Corollary}
\newtheorem{assum}{Assumption}
\newtheorem{defn}{Definition}
\newtheorem{rem}{Remark}
\def\defeq{\coloneqq}
\title{\LARGE\bf On the convergence of stochastic MPC to terminal modes of operation} 
\author{Diego Mu\~{n}oz-Carpintero$^{1}$ \and Mark Cannon$^{2}$%
\thanks{$^{1}$Electrical Engineering Department, Universidad de Chile, Av.\ Tupper 2007, Santiago, Chile, \texttt{dimunoz@ing.uchile.cl}}%
\thanks{$^{2}$Department of Engineering Science, University of Oxford, UK OX1 3PJ, \texttt{mark.cannon@eng.ox.ac.uk}}}
\begin{document}

\maketitle

\begin{abstract}
The stability of stochastic Model Predictive Control (MPC) subject to additive disturbances is often demonstrated in the literature
by constructing Lyapunov-like inequalities that guarantee closed-loop performance bounds and boundedness of the state, but convergence to a terminal control law is typically not shown. In this work we use results on general state space Markov chains to find conditions that guarantee convergence of disturbed nonlinear systems to terminal modes of operation, so that they converge in probability to \emph{a priori} known terminal linear feedback laws and achieve time-average performance equal to that of the terminal control law. We discuss implications for the convergence of control laws in stochastic MPC formulations, in particular we prove convergence for two formulations of stochastic MPC.
\end{abstract}

\section{Introduction}

Stochastic Model Predictive Control accounts for the stochastic nature of disturbances using knowledge of their probability distributions or random samples generated by an oracle~\cite{kouvaritakisandcannon2015,mesbah2016}.
%
Performance is typically evaluated by considering the expectation of a quadratic cost function, which is minimized over open-loop (or closed-loop) predicted control sequences subject to probabilistic constraints with specified probabilities.  
%
The motivation for this formulation is to reduce the conservatism of worst-case formulations of Robust MPC, and to explicitly account for stochastic disturbances in the optimization of constrained performance~\cite{mayne2014}.

Stability analyses of stochastic MPC can be divided into cases in which the disturbances are additive or multiplicative. In the latter case, Lyapunov stability and (in the regulation case) convergence to the origin is often proved. However, in the former case additive disturbances generally prevent the state from converging to the origin. For this reason the question of convergence of the state is often either ignored, or else a simplified convergence analysis is performed, focussing on the derivation of Lyapunov-like functions. In \cite{cannonetal2009a,cannonetal2009b,kouvaritakisetal2010} this difficulty is tackled through a redefinition of the cost function; the resulting performance bounds imply that the time-average stage cost converges to that of a linear state feedback (e.g.\ the unconstrained certainty equivalent optimal control law). 
The work of \cite{chatterjeeandlygeros2015} performs a detailed analysis of the convergence and performance of stochastic MPC techniques, considering typical stability notions of Markov chains and exploring the implications for stability and performance of the resulting Lyapunov-like inequalities (or geometric drift conditions).
In particular, certain Lyapunov-like inequalities provide the robust notion of input-to-state stability (ISS), and these inequalities may be used to obtain ultimate boundedness conditions on the system state. Thus, for example, the results of \cite{chatterjeeandlygeros2015} were used to conclude the stability of the stochastic MPC formulations in~\cite{paulsonetal2015,mishraetal2016}.

A shortcoming of the stability analyses so far discussed is that asymptotic convergence properties may be stronger than what is directly implied by Lyapunov-like inequalities. Recently \cite{lorenzenetal2017} presented a stochastic MPC strategy with additive disturbances, and included a convergence analysis showing that the asymptotic limit of the time-average performance is bounded by that of a terminal control law, and that the MPC law converges to the terminal law. These results are not directly obtained from the Lyapunov inequality. Instead, they come from particular properties of the MPC formulation, namely that if a candidate solution remains feasible for a given number of time steps, the system will  converge to a terminal set on which the terminal control law is feasible, and the probability of this occurring tends asymptotically to~$1$.

In this paper we perform an analysis similar to that of \cite{lorenzenetal2017}, but in a more general setting and using different tools. 
We show that, for a class of nonlinear systems, the existence of a Lyapunov function demonstrating input-to-state stability (ISS) ensures convergence of the state to a robust invariant set under the assumption that arbitrarily small disturbances have a non-vanishing probability. 
This result applies, for example, to the closed-loop system formed by a linear system subject to stochastic additive disturbances under an input-to-state stabilizing stochastic MPC law. 
Given a suitably chosen quadratic cost, the implied MPC law reduces to a linear optimal controller on this terminal set. 
Subsequently, using results on the convergence of Markov chains, we show convergence of the probability distribution of the state to a terminal mode. The paper illustrates these techniques by deriving new results on the convergence of the stochastic MPC laws proposed in~\cite{goulartandkerrigan2008} and~\cite{kouvaritakisetal2013}.

\section{Mathematical preliminaries}\label{sec:prelims}



This section introduces basic notation, definitions and a summary of relevant results on  Markov Chain convergence. 


The sets of non-negative integers and non-negative reals are denoted by ${\mathbb N}$ and ${\mathbb R}_+$, while ${\mathbb N}_k$ denotes $\{0,1,\ldots,k\}$. 
For a sequence ${\bf x}=\{x_k: k\in{\mathbb N} \}$, $x_{j|k}$ is the prediction of $x_{k+j}$ made at time $k$. 
The Minkowski sum of $X,Y\subseteq{\mathbb R}^n$ is $X\oplus Y \defeq\{x+y:x\in X,\,y\in Y \}$ and the Minkowski sum of a sequence of sets $\{X_j \}_{j\in\mathbb{N}_k}$ is denoted $\bigoplus_{j=0}^k X_j$. 
For $X\subseteq{\mathbb R}^n$, ${\bf 1}_X(x)$ is the indicator function of $X$. A continuous function $\phi:{\mathbb R}_+ \rightarrow {\mathbb R}_+$ is a ${\mathcal K}$-function if it is continuous, strictly increasing and $\phi(0)=0$, and it is a ${\mathcal K}_\infty$-function if it is a ${\mathcal K}$-function and $\phi(s)\rightarrow \infty$ as $s\rightarrow \infty$. A continuous function $\phi:{\mathbb R}_+\times{\mathbb R}_+\rightarrow {\mathbb R}_+$ is a  ${\mathcal K} {\mathcal L}$-function if for all $t\in{\mathbb R}_+$, $\phi(\cdot,t)$ is a ${\mathcal K}$-function and for all $s\in{\mathbb R}_+$, and $\phi(s,\cdot)$ is decreasing with $\phi(s,t)\rightarrow 0$ as $t\rightarrow \infty$.

A measurable space is a pair $({\bf X},\,{\mathcal B}({\bf X}))$, where ${\bf X}$ is an arbitrary set and ${\mathcal B}({\bf X})$ is a $\sigma$-algebra of ${\bf X}$. A measure on the space $({\bf X},\,{\mathcal B}({\bf X}))$ is a function $\mu:{\bf X}\rightarrow{\mathbb R}\cup\{\infty\}$ which is countably additive. The measure $\mu(\cdot)$ is said to be positive if $\mu(X)\ge0$ for all $X\in{\mathcal B}({\bf X})$, it is finite if $\mu({\bf X})<\infty$, and it is a probability measure if it is positive and $\mu({\bf X})=1$. Given two measurable spaces $({\bf X}_1,{\mathcal B}({\bf X}_1))$ and $({\bf X}_2,{\mathcal B}({\bf X}_2))$, a function $f:{\bf X}_1\rightarrow{\bf X}_2$ is said to be measurable if $f^{-1}\{X\}=\{x:f(x)\in X \}\in{\mathcal B}({\bf X}_1)$ for all sets $X\in{\mathcal B}({\bf X}_2)$. The support of the measure $\mu:{\bf X}\rightarrow{\mathbb R}_+\cup\{\infty\}$ is denoted ${\rm supp}(\mu)$ and is the closure of the set $\{X\in{\mathcal B}({\bf X}):\mu(X)>0\}$.
%
A probability space is defined by  the triple $(\Omega,{\mathcal F},{\mathbb P})$, 
where $\Omega$ is the sample space, ${\mathcal F}$ is a $\sigma$-algebra of $\Omega$ and ${\mathbb P}$ is a probability measure on $(\Omega,{\mathcal F})$.
Given a probability space $(\Omega,{\mathcal F},{\mathbb P})$ and measurable space $({\bf X},{\mathcal B}({\bf X}))$, a random variable $x$ on $\Omega$ is a measurable function $x:{\Omega}\rightarrow {\bf X}$.
The expected value of $x$ is ${\mathbb E}\{x\}$.


\subsection{Convergence of Markov chains}

\begin{defn}[Markov chains]
  Consider a measurable space $({\bf X},{\mathcal B}(\bf X))$
  and a stochastic process ${\bf x}\defeq \{x_k\in{\bf X}_k:$ $k\in \mathbb{N} \}$ defined on $(\Omega,{\mathcal F})$, where
  ${\mathcal F}$ is a $\sigma$-algebra on 
$\Omega\defeq\Omega_\infty$ where $\Omega_k= {\bf X}_0\times \cdots \times {\bf X}_k$ and
  ${\bf X}_j={\bf X},\, \forall j\in\mathbb{N}$.
  Then ${\bf x}$ is a time-homogenous Markov chain with transition probability function $P(x,{\mathcal A})\defeq {\mathbb P}\{x_{k+1}\in {\mathcal A} : x_k=x\}$ if the distribution of ${\bf x}$ satisfies the Markov property
\[
{\mathbb P}(x_{k+1}\in {\mathcal A} : x_j=\bar{x}_j,\,j\in\mathbb{N}_k)=P(\bar{x}_{k},{\mathcal A}).
\]
\end{defn}
\vspace{\baselineskip}

\begin{defn}[Invariant measure]
For the Markov chain ${\bf x}$ an invariant probability measure is a stationary distribution, i.e.\ a probability measure $\pi$ satisfying
\begin{equation}\label{invmeas}
\pi({\mathcal A})=\int{\pi(dx)P(x,{\mathcal A})}, \quad \forall {\mathcal A}\in {\mathcal B}(\bf X).
\end{equation}
\end{defn}

\begin{defn}[$\varphi$-irreducibility]
  The Markov chain ${\bf x}$ is said to be $\varphi$-irreducible if there exists a measure $\varphi$ on ${\mathcal B}({\bf X})$ such that if $\varphi({\mathcal A})>0$ then $L(x,{\mathcal A})>0$ for all $x\in{\bf X}$, where $L(x,{\mathcal A})$ is the probability that $x_k\in{\mathcal A}$ for some $k\in {\mathbb N}$ given that $x_0=x$.
\end{defn}

\begin{defn}[$d$-cycle]
A $d$-cycle is a collection of disjoint sets $\{D_i: i\in \mathbb{N}_{d-1}\}$ such that for $x_0\in D_i$, $P(x_{k},D_{{\rm mod}(k+i,d)})=1$ for all $k\in\mathbb{N}$. 
\end{defn}

\begin{defn}[Periodicity]
The period of ${\bf x}$ is the largest $d$ for which a $d$-cycle occurs. If $d=1$, ${\bf x}$ is aperiodic.
\end{defn}

We will make use of the following result to prove that a Markov chain is aperiodic \cite[Sec.~5.4.3]{meynandtweedie2009}.

\begin{thm}\label{aperiodicthm}
Let ${\bf x}$ be a $\varphi$-irreducible Markov chain with transition probability $P(x,{\mathcal A})\defeq{\mathbb P}\{x_{k+1}\in {\mathcal A} : x_k=x\}$. Assume that there is a set $C\in {\mathcal B}({\bf X})$ and a non-trivial measure $\nu(\cdot)$ on ${\mathcal B}({\bf X})$,  such that for all $x\in C$, $B\in{\mathcal B}(X)$
\[
P(x,B)\ge \nu(B) \text{ and } \nu(C)>0 .
\]
Then ${\bf x}$ is aperiodic.
\end{thm}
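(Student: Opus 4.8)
The plan is to argue by contradiction, exhibiting an incompatibility between the one-step minorization at $C$ and the existence of a nontrivial cyclic decomposition. Suppose ${\bf x}$ is not aperiodic, so its period is $d\ge 2$ and there is a $d$-cycle $\{D_0,\dots,D_{d-1}\}$, i.e.\ disjoint sets with $P(x,D_{(j+1)\bmod d})=1$ for every $x\in D_j$. For a $\varphi$-irreducible chain the cyclic classes exhaust the state space up to a set that is null for the maximal irreducibility measure $\psi$, and the minorization measure $\nu$ is absolutely continuous with respect to $\psi$; hence $\nu\bigl({\bf X}\setminus\textstyle\bigcup_i D_i\bigr)=0$, so $\nu(C)=\sum_{i=0}^{d-1}\nu(C\cap D_i)$. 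Since $\nu(C)>0$, I may fix an index $m$ with $\nu(C\cap D_m)>0$, and in particular choose a point $x\in C\cap D_m$.

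The core of the argument then needs only one such point. Because $x\in C$, the minorization $P(x,B)\ge\nu(B)$ applies with $B=C\cap D_m\subseteq D_m$, giving
\[
0<\nu(C\cap D_m)\le P(x,C\cap D_m)\le P(x,D_m).
\]
On the other hand $x\in D_m$, so the $d$-cycle property forces $P(x,\cdot)$ to be supported on $D_{(m+1)\bmod d}$; since $d\ge 2$ we have $(m+1)\bmod d\neq m$, the sets $D_m$ and $D_{(m+1)\bmod d}$ are disjoint, and therefore $P(x,D_m)=0$. This contradicts the strict inequality above. Hence no $d$-cycle with $d\ge 2$ can exist, the period equals $1$, and ${\bf x}$ is aperiodic.

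The step I expect to require the most care is the reduction at the start, namely that $\nu$ charges only the union of the cyclic classes; this rests on the cyclic decomposition theorem for $\varphi$-irreducible chains together with $\nu\ll\psi$. Without it, the $\nu$-mass of $C$ could in principle lie entirely in ${\bf X}\setminus\bigcup_i D_i$, and no suitable index $m$ would be available to trigger the contradiction. Everything after that point is elementary, using only the disjointness of the cycle, the one-step cyclic transition law, and the fact that the minorization holds \emph{simultaneously} at every $x\in C$. Conceptually the hypothesis is exactly the strong-aperiodicity condition of~\cite{meynandtweedie2009}: a one-step small set carrying positive minorization mass, which cannot coexist with a cycle of length $d\ge 2$.
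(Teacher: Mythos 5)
The paper does not actually prove this theorem --- it is stated as a rearrangement of \cite[Sec.~5.4.3]{meynandtweedie2009} ``without proof'' --- so there is no in-paper argument to compare against. Your skeleton is the standard one behind ``strongly aperiodic $\Rightarrow$ aperiodic'', and the endgame is correct: a point $x\in C\cap D_m$ with $\nu(C\cap D_m)>0$ must place mass at least $\nu(C\cap D_m)$ back into $D_m$ in one step, while membership in the cycle forces $P(x,\cdot)$ to concentrate on the disjoint set $D_{(m+1)\bmod d}$.

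However, the step you yourself flag as delicate --- that $\nu\ll\psi$ and hence $\nu$ charges $\bigcup_i D_i$ --- is asserted rather than proved, and it does \emph{not} follow from the hypotheses as stated. The standard derivation of $\nu\ll\psi$ is: if $\nu(B)>0$ then $B$ is reachable in one step from every point of $C$, and since $C$ is accessible from every initial state, $B$ is accessible from everywhere, so $\nu$ is an irreducibility measure and is dominated by the maximal one $\psi$. The missing ingredient is the accessibility of $C$ itself, i.e.\ $\psi(C)>0$; the hypothesis $\nu(C)>0$ does not supply it. Concretely, take ${\bf X}=\{0,1,2\}$ with $P(0,\{0\})=P(0,\{1\})=\tfrac12$ and $P(1,\{2\})=P(2,\{1\})=1$. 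This chain is $\delta_1$-irreducible, and $C=\{0\}$, $\nu=\tfrac12\delta_0$ satisfy every stated hypothesis ($\nu$ non-trivial, $P(x,B)\ge\nu(B)$ for all $x\in C$, $\nu(C)=\tfrac12>0$), yet $\{1\}$, $\{2\}$ form a $2$-cycle whose complement is $\psi$-null, so the chain is periodic. Thus your proof cannot be completed without the additional requirement $C\in{\mathcal B}^+({\bf X})$ (equivalently $\psi(C)>0$), which is implicit in the way Meyn and Tweedie use small sets; once it is added, $\nu$ becomes an irreducibility measure, $\nu\ll\psi$ holds, and your contradiction goes through. The extra hypothesis is harmless where the theorem is invoked in Lemma~\ref{lem3}, since there $C$ is a ball about the origin lying in the support of the irreducibility measure. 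A secondary caveat: your argument uses the fact that the cyclic classes exhaust ${\bf X}$ up to a $\psi$-null set, which is a property of the canonical cycle in Meyn--Tweedie's decomposition theorem, not of an arbitrary collection satisfying the paper's Definition~4 (which omits the null-complement clause); you should state explicitly that you are working with the maximal cycle.
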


The main result on convergence of Markov chains to be used later is presented next.

\begin{thm}\label{markovconv}
  Let ${\bf x}$ be a $\varphi$-irreducible Markov chain with state space ${\bf X}\subseteq\mathbb{R}^{n_x}$
  such that
\begin{itemize}
\item[(i)]
${\bf x}$ is generated by the system $x_{k+1}=f(x_k,w_k)$, for continuous $f:{\bf X}\times{\mathbb W}^{n_w}\rightarrow{\bf X}$, where ${\bf w}\defeq\{w_k: k\in\mathbb{N}\}$ is a stochastic disturbance sequence in ${\mathbb R}^{n_w}$,
\item[(ii)] ${\bf x}$ is aperiodic,
\item[(iii)] ${\rm supp}(\varphi)$ has non-empty interior,
\item[(iv)] there is a measurable function $V:{\bf X}\rightarrow[0,\infty]$
  such that for any $c<\infty$ the sublevel set $C_V(c)\defeq \{y: V(y)\le c \}$ is compact, and there exists a compact set $C$ and a scalar $b$ satisfying for all $x_k\in {\bf X}$:
\begin{equation}\label{LyapMC}
\Delta V(x_k)\defeq  {\mathbb E}\{V(x_{k+1})\}-V(x_k)\le -1+b{\bf 1}_C(x_k) .
\end{equation}
%
\end{itemize}
Then an invariant probability measure $\pi(\cdot)$ exists satisfying 
\begin{equation}\label{measconv}
\lim_{k\rightarrow\infty}\sup_{{\mathcal A}\in{\mathcal B}(X)} \lvert P^k(x,{\mathcal A})-\pi({\mathcal A})\rvert = 0,
\end{equation}
where $P^k(x,{\mathcal A}) \defeq {\mathbb P}\{x_{k}\in {\mathcal A} : x_0=x\}$, and the Law of Large Numbers (LLN),
\begin{equation}\label{LLN}
\lim_{k\rightarrow\infty}\frac{1}{k}\sum_{j=1}^kh(x_j)\stackrel{a.s.}{=} {\mathbb E}_\pi\{h(x)\} ,
\end{equation}
holds for every $h:{\bf X}\rightarrow {\mathbb R}$ such that ${\mathbb E}_\pi(\lvert h(x) \rvert )<\infty$, where ${\mathbb E}_\pi\bigl(h(x)\bigr)\defeq\int{\pi(dx)h(x)}$.
\end{thm}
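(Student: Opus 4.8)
The plan is to recognize the drift inequality \eqref{LyapMC} as the classical Foster--Lyapunov condition (the so-called condition (V2) of \cite{meynandtweedie2009}) and to verify that the remaining hypotheses place $\mathbf{x}$ within the class of aperiodic, positive Harris recurrent chains, for which both \eqref{measconv} and \eqref{LLN} are standard. I would organise the argument into three stages: (a) show that compact subsets of $\mathbf{X}$ are petite; (b) use the drift condition to deduce positive Harris recurrence and the existence of $\pi$; and (c) invoke the aperiodic ergodic theorem and the strong law of large numbers.

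For stage (a), condition (i) gives the weak Feller property: for bounded continuous $g$, the map $x\mapsto \E\{g(f(x,w))\}$ is continuous by continuity of $f$ and dominated convergence. A weak Feller, $\varphi$-irreducible chain whose irreducibility measure has support with non-empty interior---condition (iii)---is a $T$-chain \cite[Ch.~6]{meynandtweedie2009}, and for $T$-chains every compact set is petite. In particular the set $C$ appearing in \eqref{LyapMC} is petite.

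For stage (b), condition (iv) makes $V$ norm-like, since compactness of its sublevel sets forces $V(x)\to\infty$ off compact sets, and \eqref{LyapMC} is exactly the drift toward the petite set $C$ demanded by the Foster--Lyapunov criterion \cite[Thm.~11.3.4]{meynandtweedie2009}; this yields positive recurrence and an invariant probability measure $\pi$, unique by $\varphi$-irreducibility. Since $\mathbf{x}$ is a $T$-chain, the recurrence obtained here is in fact Harris recurrence \cite[Ch.~9]{meynandtweedie2009}, so $\mathbf{x}$ is positive Harris recurrent. Stage (c) then follows from known results: aperiodicity (condition (ii)) together with positive Harris recurrence gives the total-variation convergence \eqref{measconv} by the aperiodic ergodic theorem \cite[Thm.~13.0.1]{meynandtweedie2009}, while positive Harris recurrence alone gives \eqref{LLN} for every $\pi$-integrable $h$ \cite[Thm.~17.0.1]{meynandtweedie2009}.

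The main obstacle is stage (a): the drift machinery consumes petiteness rather than compactness, so the crux is translating the analytic hypotheses (continuity of $f$ and non-empty interior of $\mathrm{supp}(\varphi)$) into a continuous, non-trivial minorizing component of the one-step kernel over compact sets, i.e.\ the $T$-chain property. Once compact sets are known to be petite, the recurrence and ergodicity conclusions are essentially direct citations of the Foster--Lyapunov and ergodic theorems.
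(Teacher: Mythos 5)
Your proposal is correct and consistent with the paper, which states this theorem without proof as a ``rearrangement of results'' from Meyn and Tweedie: the chain of deductions you give (continuity of $f$ yields the weak Feller property, which with $\varphi$-irreducibility and non-empty interior of ${\rm supp}(\varphi)$ makes the chain a $T$-chain so that compact sets, in particular $C$, are petite; the drift condition \eqref{LyapMC} is then the Foster--Lyapunov criterion giving positive Harris recurrence and the invariant measure $\pi$; and aperiodicity plus positive Harris recurrence yield \eqref{measconv} and \eqref{LLN} via the aperiodic ergodic theorem and the LLN for Harris chains) is exactly the intended assembly of those results. In effect you have supplied the justification the authors omit, with the key observation---that hypotheses (i) and (iii) exist precisely to convert compactness into petiteness---correctly identified as the only non-routine step.
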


Theorems~\ref{aperiodicthm} and~\ref{markovconv} are rearrangements of results given in~\cite{meynandtweedie2009}, which we state here without proof.

\section{Problem Setting}\label{sec:setting}

Consider a discrete time nonlinear system given by
\begin{equation} \label{eq1} 
x_{k+1} =f(x_k,w_k)\defeq g(x_k)+Dw_k,
\end{equation} 
where $x_{k}\in {\bf X}\subseteq {\mathbb R}^{n_{x} } $ is the state, $w_{k}\in{\bf W}\subseteq{\mathbb R}^{n_w}$ is a disturbance and $g:{\bf X}\rightarrow {\bf X}$ is a continuous function such that $g(0)=0$. Current and future values of $w_k$ are unknown. 

\begin{assum}\label{distass}
$\{w_k\in{\bf W} : k\in {\mathbb N}\}$ is a zero mean, independent and identically distributed (i.i.d.) disturbance sequence with a non-singular probability distribution supported in ${\bf W}$, which is a bounded set containing the origin in its interior. Furthermore ${\mathbb P}\{||w||\le \lambda \}>0$ for all $\lambda>0$.
\hfill$\blacktriangleleft$
\end{assum}

\begin{rem}
The i.i.d.\ property as well as the null expected value are standard requirements in stochastic MPC formulations. The requirement that the disturbance is contained in a bounded set is necessary to guarantee that there is a robustly invariant set (as will be introduced later), and is also common in stochastic MPC \cite{kouvaritakisandcannon2015,mesbah2016}. Finally, the property that ${\mathbb P}\{||w||\le \lambda \}>0$ for any $\lambda>0$ is needed to prove the desired result. Although this assumption appears strong, it does not require continuity of the density function, and is satisfied by many common probability distributions.
\end{rem}

We consider in this paper nonlinear systems~\eqref{eq1} that have a linear terminal mode of operation to which one desires to prove convergence. This is typically the case for linear systems under MPC.
Thus the system dynamics are linear on some robustly invariant set, and state trajectories converge to a regime defined by an invariant probability measure within this set. This property is formally characterized next. 

\begin{assum}\label{terminalass}
There exists a bounded set ${\bf X}_f\subseteq{\bf X}$ containing the origin in its interior, such that for all $x\in {\bf X}_f$:
\begin{itemize}
  \item[(i)] $f(x,w)\in{\bf X}_f$ for all $w\in{\bf W} $, i.e. ${\bf X}_f$ is a robustly invariant set.
\item[(ii)] $f(x,w)=Ax+Dw$ for all $x\in{\bf X}_f$, where $A$ is Schur stable and the pair $(A,D)$ is controllable. \hfill$\blacktriangleleft$
\end{itemize}
\end{assum}

The linear system 
\begin{equation}\label{lineardynamics}
x_{k+1}=Ax_k+Dw_k
\end{equation}
where $A$ is Schur stable and $(A,D)$ is controllable, defines an invariant probability measure $\pi(\cdot)$ and transition probability function $P(x,\cdot)$ that satisfy \eqref{invmeas}. Furthermore, the system converges to this stationary regime \cite{meynandtweedie2009}. In this case $\pi(\cdot)$ is the probability measure of $\sum_{k=0}^\infty A^kDw_k$, which is supported in the minimal invariant set ${\bf X}_\infty=\bigoplus_{k=0}^\infty A^kD{\bf W}$.

We make use of the concept of input-to-state stability.

\begin{defn}[Input-to-state stability (ISS)~\cite{jiangandwang2001}] For the system \eqref{eq1}, the origin is input-to-state stable (ISS) with region of attraction ${\bf X}\subseteq{\mathbb R}^{n_x}$ if there exist a ${\mathcal K}{\mathcal L}$-function $\beta(\cdot,\cdot)$ and a ${\mathcal K}$-function $\gamma(\cdot)$ such that, for all $x_0\in{\bf X}$ and $w \in {\bf W}$, the system \eqref{eq1} satisfies $x_k\in{\bf X}$ for all $k\in{\mathbb N}$ and
\begin{equation} \label{iss}
||x_k||\le \beta(||x_0||,k)+\gamma(\sup\{||w_t||:t\in \mathbb{N}_{k-1} \} ).
\end{equation}
\end{defn}

The following lemmas provide a useful sufficient condition that is easy to check for guaranteeing ISS.

\begin{lem}[\cite{jiangandwang2001}]\label{lemiss}
For system \eqref{eq1}, the origin is ISS with region of attraction ${\bf X}\subseteq{\mathbb R}^{n_x}$ if ${\bf X}$ contains the origin in its interior and is robustly invariant, and there exist a continuous function $V:{\bf X}\rightarrow {\mathbb R}_+$ (called an ISS-Lyapunov function), ${\mathcal K}_\infty$-functions $\alpha_1$, $\alpha_2$ and $\alpha_3$, and a ${\mathcal K}$-function $\sigma$ such that
\begin{subequations}
\begin{gather}
  \alpha_1(||x||)\le V(x) \le \alpha_2(||x||), 
  \label{lyapkappa}\\
  V\left(f(x,w)\right)-V(x)\le -\alpha_3(||x||)+\sigma(||w||), 
  \label{isslyap}
  \end{gather}
\end{subequations}
for all $x\in{\bf X}$ and $w\in{\bf W}$.
\end{lem}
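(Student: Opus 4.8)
The plan is to follow the standard route for deriving ISS from an ISS-Lyapunov function, adapted to the discrete-time system \eqref{eq1}: first turn the one-step dissipation inequality \eqref{isslyap} into a scalar recursion for $V$ alone, then extract a $\mathcal{KL}/\mathcal{K}$ estimate for $V(x_k)$, and finally translate this into a bound on $\|x_k\|$ via the sandwich bounds \eqref{lyapkappa}. Since $V(x)\le\alpha_2(\|x\|)$ gives $\|x\|\ge\alpha_2^{-1}(V(x))$ and $\alpha_3$ is increasing, \eqref{isslyap} yields
\[
V(x_{k+1})\le V(x_k)-\rho\bigl(V(x_k)\bigr)+\sigma(\|w_k\|),\qquad \rho\defeq\alpha_3\circ\alpha_2^{-1}\in\mathcal{K}_\infty,
\]
so that, writing $v_k\defeq V(x_k)$ and $\bar\mu_k\defeq\max_{t\in\mathbb{N}_{k-1}}\sigma(\|w_t\|)$, it suffices to analyse the comparison inequality $v_{k+1}\le v_k-\rho(v_k)+\bar\mu_{k+1}$ with $v_k\ge0$.

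The core step is a two-regime argument on this recursion. When the state is large relative to the disturbance, i.e.\ $\rho(v_k)\ge 2\bar\mu_{k+1}$, the disturbance is dominated and $v_{k+1}\le v_k-\tfrac12\rho(v_k)$, so $V$ decreases by a definite amount; iterating this pure-decrease map yields, via a standard comparison lemma for scalar difference inequalities, a $\mathcal{KL}$-function $\tilde\beta$ describing the transient decay. When the state is small, $\rho(v_k)<2\bar\mu_{k+1}$, i.e.\ $v_k<\rho^{-1}(2\bar\mu_{k+1})$, and then $v_{k+1}<\rho^{-1}(2\bar\mu_{k+1})+\bar\mu_{k+1}=b(\bar\mu_{k+1})$, where $b(s)\defeq\rho^{-1}(2s)+s\in\mathcal{K}$. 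Checking both subcases shows the sublevel set $\{v\le b(\bar\mu_k)\}$ is forward invariant (using that $\bar\mu_k$ is nondecreasing in $k$), so combining the two regimes gives the discrete ISS estimate for the Lyapunov function,
\[
v_k\le\max\bigl\{\tilde\beta(v_0,k),\,b(\bar\mu_k)\bigr\},\qquad \forall k\in\mathbb{N}.
\]

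Finally I would convert back to the state norm. Using $\alpha_1(\|x_k\|)\le v_k$ and $v_0\le\alpha_2(\|x_0\|)$, together with $\alpha_1^{-1}(\max\{a,b\})\le\alpha_1^{-1}(a)+\alpha_1^{-1}(b)$ for the increasing map $\alpha_1^{-1}$, gives
\[
\|x_k\|\le\alpha_1^{-1}\bigl(\tilde\beta(\alpha_2(\|x_0\|),k)\bigr)+\alpha_1^{-1}\bigl(b(\bar\mu_k)\bigr).
\]
Setting $\beta(r,k)\defeq\alpha_1^{-1}(\tilde\beta(\alpha_2(r),k))$, which is a $\mathcal{KL}$-function, and noting that monotonicity of $\gamma\defeq\alpha_1^{-1}\circ b\circ\sigma\in\mathcal{K}$ lets one pull the running maximum outside, i.e.\ $\alpha_1^{-1}(b(\bar\mu_k))=\gamma(\sup\{\|w_t\|:t\in\mathbb{N}_{k-1}\})$, recovers exactly the bound \eqref{iss}; robust invariance of $\mathbf{X}$ supplies the remaining requirement $x_k\in\mathbf{X}$ for all $k$.

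The main obstacle I anticipate is the middle step: rigorously producing the $\mathcal{KL}$-function $\tilde\beta$ from the pure-decrease recursion $v_{k+1}\le v_k-\tfrac12\rho(v_k)$. One must handle the possibility that $\tfrac12\rho(v)>v$ for large $v$ (so the nominal map would be negative, clipped by $v_k\ge0$) and verify that the iterates of the associated scalar difference equation define a genuine class-$\mathcal{KL}$ decay; this is precisely the point where I would invoke (or reprove) the comparison results underlying \cite{jiangandwang2001}. The two-regime bookkeeping that stitches the decay estimate together with the invariant small-state bound, while keeping every function in the correct comparison class, is routine but is where most of the care is required.
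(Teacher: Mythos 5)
The paper states this lemma without proof, citing \cite{jiangandwang2001}, so there is no in-paper argument to compare against; your sketch reproduces the standard proof from that reference (one-step dissipation inequality turned into a scalar recursion for $V$ via $\rho=\alpha_3\circ\alpha_2^{-1}$, a two-regime max-form estimate with a forward-invariant sublevel set, then conversion through $\alpha_1^{-1}$) and is correct as outlined. The step you leave open --- extracting the $\mathcal{KL}$-function $\tilde\beta$ from the decrease recursion $v_{j+1}\le v_j-\tfrac12\rho(v_j)$, including the clipping needed when $\tfrac12\rho(v)>v$ --- is precisely the comparison lemma established in \cite{jiangandwang2001}, so invoking it there closes the argument.
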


\begin{lem}[\cite{goulartetal2006}]\label{lemiss2}
Let ${\bf X}\subseteq {\mathbb R}^{n_x}$ be a robust positively invariant set for \eqref{eq1} containing the origin in its interior, and let ${\mathcal K}_\infty$-functions $\alpha_1$, $\alpha_2$, $\alpha_3$ and a Lipschitz continuous function $V:{\bf X}\rightarrow {\mathbb R}_+$ exist such that, for all $x\in{\bf X}$,
\begin{subequations}
\begin{gather}
  \alpha_1(||x||)\le V(x) \le \alpha_2(||x||),
  \label{lyapkappa2}\\
  V\left(f(x,0)\right)-V(x)\le -\alpha_3(||x||).
  \label{isslyap2}
\end{gather}
\end{subequations}
Then $V(\cdot)$ is an ISS-Lyanpunov function and the origin is ISS for system~\eqref{eq1} with region of attraction ${\bf X}$ if: (i)~$f(\cdot,\cdot)$ is Lipschitz continuous on ${\bf X}\times{\bf W}$; or (ii)~$f(x,w):=g(x)+w$, where $g:{\bf X}\rightarrow{\mathbb R}^{n_x}$ is continuous on ${\bf X}$.
\end{lem}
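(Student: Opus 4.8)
The plan is to verify that, in either case~(i) or~(ii), the nominal drift bound~\eqref{isslyap2} can be upgraded to the perturbed ISS-Lyapunov drift bound~\eqref{isslyap}, after which the conclusions that $V(\cdot)$ is an ISS-Lyapunov function and that the origin is ISS with region of attraction ${\bf X}$ both follow at once by invoking Lemma~\ref{lemiss}. Note that~\eqref{lyapkappa2} already coincides with the sandwich bound~\eqref{lyapkappa}, and that Lipschitz continuity of $V$ implies the continuity required by Lemma~\ref{lemiss}, so only the decrease condition needs attention.

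The central step is a perturbation estimate based on the Lipschitz continuity of $V$. Let $L_V$ be a Lipschitz constant for $V$ on ${\bf X}$. Since ${\bf X}$ is robustly positively invariant, both $f(x,w)$ and $f(x,0)$ lie in ${\bf X}$ for every $x\in{\bf X}$ and $w\in{\bf W}$, so
\[
V\bigl(f(x,w)\bigr)-V\bigl(f(x,0)\bigr)\le L_V\,||f(x,w)-f(x,0)|| .
\]
I would then bound $||f(x,w)-f(x,0)||$ by a multiple of $||w||$ in each case separately. In case~(i), joint Lipschitz continuity of $f$ on ${\bf X}\times{\bf W}$ with constant $L_f$ gives $||f(x,w)-f(x,0)||\le L_f||w||$. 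In case~(ii) the additive structure yields $f(x,w)-f(x,0)=w$ exactly, so $||f(x,w)-f(x,0)||=||w||$ with no regularity of $g$ needed beyond continuity (which serves only to keep the dynamics well posed).

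Combining this estimate with~\eqref{isslyap2} and writing the increment as
\[
V\bigl(f(x,w)\bigr)-V(x)=\Bigl[V\bigl(f(x,w)\bigr)-V\bigl(f(x,0)\bigr)\Bigr]+\Bigl[V\bigl(f(x,0)\bigr)-V(x)\Bigr],
\]
I obtain $V(f(x,w))-V(x)\le -\alpha_3(||x||)+L_Vc\,||w||$, where $c=L_f$ in case~(i) and $c=1$ in case~(ii). Setting $\sigma(s)\defeq L_Vc\,s$, which is a ${\mathcal K}_\infty$- (hence ${\mathcal K}$-) function, this is precisely~\eqref{isslyap} with the same ${\mathcal K}_\infty$-function $\alpha_3$. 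Thus all hypotheses of Lemma~\ref{lemiss} are met and the result follows.

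I do not expect a deep obstacle here, since the statement is fundamentally a perturbation estimate; the only point requiring care is the exact role of the two regularity hypotheses. Specifically, the additive disturbance in case~(ii) makes the increment $f(x,w)-f(x,0)$ independent of the state, so that Lipschitz continuity of $V$ alone suffices and mere continuity of $g$ is enough, whereas case~(i) genuinely relies on the Lipschitz bound on $f$ to control the state-dependent part of the increment uniformly over ${\bf X}$.
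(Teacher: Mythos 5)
Your proof is correct and follows essentially the same route as the original argument in \cite{goulartetal2006}, which the paper itself does not reproduce (the lemma is imported by citation): the Lipschitz constant of $V$ on the robustly invariant set ${\bf X}$, combined with the bound $\|f(x,w)-f(x,0)\|\le c\|w\|$ available in either case, upgrades the nominal decrease \eqref{isslyap2} to the ISS decrease \eqref{isslyap} with $\sigma(s)=L_V c\,s$, after which Lemma~\ref{lemiss} applies. Your observation about the differing roles of the two regularity hypotheses is also the right one, and you correctly note that robust positive invariance (with $0\in{\bf W}$) is what keeps both $f(x,w)$ and $f(x,0)$ inside the domain on which $V$ is Lipschitz.
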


\begin{assum}\label{lyapunov}
The system \eqref{eq1} has an ISS-Lyapunov function satisfying the conditions of either Lemma \ref{lemiss} or \ref{lemiss2}.
\hfill$\blacktriangleleft$
\end{assum}

Note that, if the conditions of Lemma \ref{lemiss2} hold, then the conditions of Lemma \ref{lemiss} necessarily also hold \cite{goulartetal2006}.
Furthermore, ISS implies~\cite{jiangandwang2001} that: (i) the origin is asymptotically stable for $x_{k+1}=f(x_k,0)$; (ii)  all state trajectories are bounded for bounded $w(\cdot)$; and (iii) 
all trajectories converge to the origin as $k\rightarrow\infty$  if $w_k\rightarrow0$. 

These properties do not directly guarantee convergence to the terminal mode of operation defined in Assumption \ref{terminalass}. In this work, however, we will assume that the system possesses an ISS-Lyapunov function, which implies that the origin is ISS. This property is coupled with the stochastic nature of the disturbance sequence to prove convergence to ${\bf X}_f$.

\section{Main results}\label{sec:results}

This section shows that, under Assumptions \ref{distass}, \ref{terminalass} and \ref{lyapunov}, the system converges to the terminal regime described by Assumption~\ref{terminalass}.

\begin{thm}
  Under Assumptions \ref{distass}, \ref{terminalass} and \ref{lyapunov}, the Markov chain ${\bf x}=\{x_k\in{\bf X} : k \in {\mathbb N}\}$ defined by \eqref{eq1} is $\varphi$-irreducible.
\end{thm}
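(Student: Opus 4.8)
The plan is to exhibit a nontrivial measure $\varphi$ on $\mathcal{B}({\bf X})$ and to verify that every set $\mathcal{A}$ with $\varphi(\mathcal{A})>0$ is reached with positive probability from every initial state, which is exactly the definition of $\varphi$-irreducibility. A natural choice is to let $\varphi$ be Lebesgue measure restricted to the interior of the minimal robustly invariant set ${\bf X}_\infty=\bigoplus_{k=0}^\infty A^k D{\bf W}\subseteq{\bf X}_f$ (equivalently, the stationary measure $\pi$ of the linear dynamics \eqref{lineardynamics}, which is absolutely continuous there). The argument then splits into two phases: first, driving the state from an arbitrary $x_0\in{\bf X}$ into a neighborhood of the origin contained in ${\bf X}_f$ using the ISS property guaranteed by Assumption \ref{lyapunov}; second, spreading the state throughout $\mathrm{int}({\bf X}_\infty)$ using the linear dynamics and the controllability of $(A,D)$ from Assumption \ref{terminalass}.

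For the first phase, fix $\epsilon>0$ small enough that the ball $B_\epsilon(0)=\{x:\|x\|\le\epsilon\}$ lies in ${\bf X}_f$, which is possible since ${\bf X}_f$ contains the origin in its interior. Using the ISS bound \eqref{iss}, $\|x_k\|\le\beta(\|x_0\|,k)+\gamma(\sup_{t<k}\|w_t\|)$, I would choose $\lambda>0$ with $\gamma(\lambda)\le\epsilon/2$ and then $N$ large enough that $\beta(\|x_0\|,N)\le\epsilon/2$; on the event that $\|w_j\|\le\lambda$ for all $j\in\mathbb{N}_{N-1}$ this forces $x_N\in B_\epsilon(0)\subseteq{\bf X}_f$. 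Since the disturbances are i.i.d.\ and Assumption \ref{distass} gives ${\mathbb P}\{\|w\|\le\lambda\}>0$, this event has probability at least $({\mathbb P}\{\|w\|\le\lambda\})^N>0$, so $P^N(x_0,B_\epsilon(0))>0$ for every $x_0\in{\bf X}$.

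For the second phase, I work inside ${\bf X}_f$, where by Assumption \ref{terminalass}(ii) the dynamics reduce to $x_{k+1}=Ax_k+Dw_k$ and robust invariance keeps every trajectory in ${\bf X}_f$. Iterating $m\ge n_x$ steps gives $x_m=A^m y+\sum_{j=0}^{m-1}A^{m-1-j}Dw_j$, an affine map of $(w_0,\dots,w_{m-1})$ whose linear part has range equal to the column space of the controllability matrix $[\,D\ AD\ \cdots\ A^{n_x-1}D\,]$, hence all of ${\mathbb R}^{n_x}$. Because the disturbance distribution is non-singular (absolutely continuous) and, by Assumption \ref{distass}, assigns positive probability to every neighborhood of the origin, the push-forward of the product density through this surjective map yields an absolutely continuous component of $P^m(y,\cdot)$ whose density is strictly positive on an open subset of $\mathrm{int}({\bf X}_\infty)$ for every $y\in B_\epsilon(0)$; letting $m$ grow, these reachable sets exhaust $\mathrm{int}({\bf X}_\infty)$, so from any such $y$ the chain eventually hits any $\mathcal{A}$ with $\varphi(\mathcal{A})>0$ with positive probability.

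Combining the two phases through the Markov property, from any $x_0\in{\bf X}$ the chain enters $B_\epsilon(0)\subseteq{\bf X}_f$ at time $N$ with positive probability and thereafter hits $\mathcal{A}$ with positive probability, so $L(x_0,\mathcal{A})>0$ for every $\mathcal{A}$ with $\varphi(\mathcal{A})>0$, establishing $\varphi$-irreducibility. The main obstacle is the second phase: converting controllability of $(A,D)$ together with a merely non-singular (possibly discontinuous) disturbance density into a genuinely positive reachable density on an open subset of ${\bf X}_\infty$. This requires care because a single $w_j$ need not admit a density on ${\mathbb R}^{n_x}$ when $n_w<n_x$; it is the rank condition supplied by controllability, together with the positivity of the disturbance law near the origin from Assumption \ref{distass}, that prevents the push-forward measure from concentrating on a lower-dimensional manifold and makes it absolutely continuous and positive on an open set.
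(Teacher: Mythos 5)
Your proposal is correct and follows essentially the same two-phase argument as the paper: use the ISS bound together with $\mathbb{P}\{\|w\|\le\lambda\}>0$ to reach ${\bf X}_f$ (a neighbourhood of the origin) with positive probability from any initial state, then invoke $\varphi$-irreducibility of the linear dynamics $x_{k+1}=Ax_k+Dw_k$ inside the robustly invariant set ${\bf X}_f$, taking $\varphi=\pi$. The only difference is that the paper disposes of your ``second phase'' (and the obstacle you flag about push-forward densities under a merely non-singular disturbance law) by citing \cite[Prop.~6.3.5]{meynandtweedie2009} for the irreducibility of the controllable linear system, rather than arguing it from scratch.
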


\begin{proof}
First note that, since $(A,D)$ is controllable and $w_k$ has a non-singular distribution, system \eqref{lineardynamics} is necessarily $\varphi$-irreducible \cite[Prop.~6.3.5]{meynandtweedie2009}. 

Secondly, from \eqref{iss} it follows that the state of (\ref{eq1}) converges to the origin if the disturbance sequence satisfies $w_k=0$ for all $k\in{\mathbb N}$. Since $\gamma(\cdot)$ is continuous, it follows that for any neighbourhood ${\mathcal E}$ of the origin, there is an $\epsilon>0$ such that ${\mathcal E}$ will be reached in $m$ steps whenever $||w_k||\le \epsilon$ for all $k\in\mathbb{N}_{m-1}$. Thus, there is a non-zero probability that ${\mathcal E}$ will be reached. In particular, let ${\mathcal E}={\bf X}_f$. Then, for any initial state, there is a non-zero probability of reaching ${\bf X}_f$, and, since ${\bf X}_f$ is robustly invariant, of remaining in this set at all subsequent times. Moreover, 
since \eqref{lineardynamics} is $\varphi$-irreducible, there is a non-zero probability of reaching any set ${\mathcal A}\in{\mathcal B}({\bf X})$ with $\varphi({\mathcal A})>0$. Thus, ${\bf x}$ is $\varphi$-irreducible with the invariant measure of system \eqref{lineardynamics}, namely with $\varphi=\pi$. \end{proof}

\begin{prop}\label{prop1} Under Assumptions \ref{distass}, \ref{terminalass} and \ref{lyapunov} we have:
  \begin{itemize}
\item[(i)]
  ${\rm supp}(\varphi)$ has a non-empty interior,
\item[(ii)]
  There is a measurable function $V:{\bf X}\rightarrow[0,\infty]$
  such that for any $c<\infty$ the sublevel set $C_V(c)=\{y: V(y)\le c \}$ is compact, and there is a compact set $C$ satisfying \eqref{LyapMC} for all $x\in {\bf X}$.
\end{itemize}
\end{prop}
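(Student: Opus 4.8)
The proposition has two parts. Part (i) asks to show $\mathrm{supp}(\varphi)$ has non-empty interior. From the preceding theorem, we have $\varphi = \pi$, the invariant measure of the linear system \eqref{lineardynamics}, and the text already states that $\pi$ is supported on the minimal robust invariant set ${\bf X}_\infty = \bigoplus_{k=0}^\infty A^k D {\bf W}$. So the plan for (i) is to argue that ${\bf X}_\infty$ (or more precisely the support of $\pi$) has non-empty interior.

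Let me think about how to establish this. Since $(A,D)$ is controllable, the matrix $[D, AD, A^2 D, \ldots, A^{n_x-1}D]$ has full row rank $n_x$. The set ${\bf X}_\infty$ contains the finite partial Minkowski sum $\bigoplus_{k=0}^{n_x-1} A^k D {\bf W}$. Under Assumption 1, ${\bf W}$ contains the origin in its interior, so it contains a ball. The image of a full-dimensional ball under the controllability map spans $\mathbb{R}^{n_x}$, meaning the partial sum contains a neighborhood of the origin in $\mathbb{R}^{n_x}$. Combined with the fact that $\mathbb{P}\{\|w\| \le \lambda\} > 0$ for all $\lambda > 0$, this forces $\pi$ to assign positive measure to open neighborhoods, so $\mathrm{supp}(\pi)$ has non-empty interior.

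Part (ii) is more substantial: construct a measurable $V$ with compact sublevel sets satisfying the drift condition \eqref{LyapMC}, $\mathbb{E}\{V(x_{k+1})\} - V(x_k) \le -1 + b\,\mathbf{1}_C(x_k)$. The natural candidate is to reuse the ISS-Lyapunov function from Assumption 3, suitably scaled. The idea is that \eqref{isslyap} gives $V(f(x,w)) - V(x) \le -\alpha_3(\|x\|) + \sigma(\|w\|)$ for the deterministic transition; taking expectations over $w$ yields $\mathbb{E}\{V(x_{k+1})\} - V(x_k) \le -\alpha_3(\|x_k\|) + \mathbb{E}\{\sigma(\|w\|)\}$. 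Since ${\bf W}$ is bounded, $\bar\sigma := \mathbb{E}\{\sigma(\|w\|)\}$ is a finite constant. Outside a large compact set $C$, the term $\alpha_3(\|x_k\|)$ dominates: choose $C = \{x : \alpha_3(\|x\|) \le \bar\sigma + 1\}$, which is compact since $\alpha_3 \in \mathcal{K}_\infty$. For $x \notin C$ we get $\alpha_3(\|x\|) > \bar\sigma + 1$, hence the drift is $\le -1$; inside $C$ the drift is bounded by some finite $b$ (using continuity of $V$ on the compact $C$). The sublevel sets of $V$ are compact because $V(x) \ge \alpha_1(\|x\|)$ with $\alpha_1 \in \mathcal{K}_\infty$, which makes $C_V(c)$ bounded, and closed by continuity of $V$.

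The main obstacle I anticipate is the normalization to get exactly $-1$ rather than a generic negative constant, and verifying measurability and finiteness of $\mathbb{E}\{V(x_{k+1})\}$. The $-1$ is purely cosmetic: one can rescale $V$ by any positive constant without affecting compactness of sublevel sets, so after achieving drift $\le -\delta$ outside $C$ for some $\delta > 0$, replace $V$ by $V/\delta$. The subtle point is ensuring $\mathbb{E}\{\sigma(\|w\|)\} < \infty$; this follows because $\sigma$ is continuous (hence bounded on the compact ${\bf W}$) and $w$ is supported in the bounded set ${\bf W}$ by Assumption 1. I would therefore structure the proof of (ii) as: (a) take expectations of the ISS drift inequality, (b) bound the disturbance term by a finite constant $\bar\sigma$ using boundedness of ${\bf W}$, (c) define $C$ as the relevant compact sublevel set of $\alpha_3$, (d) rescale $V$ to normalize the drift to $-1$, and (e) confirm compactness of sublevel sets via the lower bound $\alpha_1$.
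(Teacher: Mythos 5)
Your proposal is correct and follows essentially the same route as the paper: part (i) rests on controllability of $(A,D)$ and $0\in\mathrm{int}({\bf W})$ to show ${\bf X}_\infty$ contains a neighbourhood of the origin, and part (ii) takes expectations of the ISS inequality \eqref{isslyap}, bounds $\mathbb{E}\{\sigma(\|w\|)\}$ using boundedness of ${\bf W}$, chooses $C$ as a sublevel set of $\alpha_3$, and rescales $V$ to normalize the drift to $-1$, with compactness of $C_V(c)$ following from \eqref{lyapkappa}. Your treatment of (i) via the finite partial sum $\bigoplus_{k=0}^{n_x-1}A^kD{\bf W}$ is in fact slightly more explicit than the paper's, but it is the same argument.
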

\begin{proof}
  (i) The support of $\varphi(\cdot)$ is ${\bf X}_\infty=\bigoplus_{k=0}^\infty A^kD{\bf W}$, which clearly contains $D{\bf W}$, so the interior of ${\rm supp}(\varphi)$ cannot be empty since $0\in{\rm int}(\bf W)$ and $(A,D)$ is controllable.

(ii) Let $C_{\alpha_3}(d')=\{x:\alpha_3(x)\le d' \}$, with $d'>d$, where $d=\max\left({\mathbb E}\{\sigma(||w||)\}:w\in {\bf W}\right)$. Both $C_V(c)$, for any $c>0$, and $C_{\alpha_3}(d')$ are compact due to Assumption \ref{lyapunov}, particularly from continuity of $V(\cdot)$ and $\alpha_3(\cdot)$, and \eqref{lyapkappa}. Condition \eqref{isslyap} implies that $\Delta V(x)\le-\alpha_3(x)+d\le-(d'-d)$ for $x\notin C_{\alpha_3}(d')$ and trivially $\Delta V(x)\le-\alpha_3(x)+d\le d$ for $x\in C_{\alpha_3}(d')$. This can be rewritten as $\Delta \tilde{V}(x)\le-1+b{1}_{C_{\alpha_3}(d')}(x)$ with $\tilde{V}(x)=V(x)/(d'-d)$ and $b=1+d/(d'-d)$. Hence (\ref{LyapMC}) holds with $C=C_{\alpha_3}(d')$ and $V(x)$ replaced by $\tilde{V}(x)$, and $C_{\tilde{V}}(c)$ is compact for any $c>0$.
\end{proof}

\begin{lem}\label{lem3}
Under Assumptions \ref{distass}, \ref{terminalass} and \ref{lyapunov}, the Markov chain ${\bf x}$ is aperiodic.
\end{lem}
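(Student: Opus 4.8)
The plan is to invoke Theorem~\ref{aperiodicthm}, which reduces aperiodicity to exhibiting a so-called small set: a set $C\in{\mathcal B}({\bf X})$ together with a non-trivial measure $\nu(\cdot)$ such that $P(x,B)\ge\nu(B)$ for all $x\in C$ and all $B\in{\mathcal B}({\bf X})$, with $\nu(C)>0$. Since the earlier theorem establishing $\varphi$-irreducibility already applies under Assumptions~\ref{distass}, \ref{terminalass} and~\ref{lyapunov}, the remaining work is to construct such a $C$ and $\nu$. The natural candidate is to take $C={\bf X}_f$, the terminal robustly invariant set, because on ${\bf X}_f$ the dynamics are exactly linear, $f(x,w)=Ax+Dw$, and the disturbance has a non-singular distribution; this is precisely the regime where one expects a uniform lower bound on the one-step transition to hold.

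First I would fix $C={\bf X}_f$ and analyse the one-step transition kernel for $x\in{\bf X}_f$. Because the dynamics are linear there, the law of $x_{k+1}$ given $x_k=x$ is the pushforward of the disturbance distribution under $w\mapsto Ax+Dw$, i.e.\ a translate by $Ax$ of the law of $Dw$. For $B\in{\mathcal B}({\bf X})$ this gives $P(x,B)={\mathbb P}\{Dw\in B-Ax\}$. The key point is that, as $x$ ranges over the bounded set ${\bf X}_f$ and $A$ is Schur (so $A{\bf X}_f$ is bounded), the family of translated disturbance measures shares a common nonzero lower bound on a fixed set. Concretely, I would use Assumption~\ref{distass}, in particular the non-singularity of the disturbance distribution together with ${\mathbb P}\{\|w\|\le\lambda\}>0$ for all $\lambda>0$, to produce a measure $\nu$ dominated from above by $P(x,\cdot)$ uniformly in $x\in C$. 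One clean way is to define $\nu(B)\defeq\inf_{x\in C}P(x,B)$, or to restrict attention to a small ball around the origin contained in the interior of ${\bf X}_f$ and set $\nu$ to be a suitably scaled restriction of the induced measure there; the construction must guarantee $\nu$ is non-trivial and that $\nu(C)>0$, which follows because $0\in{\rm int}({\bf X}_f)$ and the disturbance places positive mass arbitrarily near the origin.

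Having produced $C$ and $\nu$ with $P(x,B)\ge\nu(B)$ for all $x\in C$, $B\in{\mathcal B}({\bf X})$ and $\nu(C)>0$, the hypotheses of Theorem~\ref{aperiodicthm} are met (the $\varphi$-irreducibility required by that theorem was established in the preceding theorem), and aperiodicity of ${\bf x}$ follows immediately. I would then conclude the proof by simply citing Theorem~\ref{aperiodicthm}.

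The main obstacle is the uniform minorization step: showing that a single non-trivial measure $\nu$ lies below $P(x,\cdot)$ for \emph{every} $x\in{\bf X}_f$ simultaneously, rather than merely for each fixed $x$. This requires controlling the translated disturbance laws uniformly over the compact image $A{\bf X}_f$, and it is exactly here that the linearity of the terminal dynamics and the structural hypotheses on the disturbance (non-singularity plus positive mass near the origin, with ${\bf W}$ bounded and $0\in{\rm int}({\bf W})$) are essential. I expect the verification of the lower bound to reduce to a continuity/compactness argument on the disturbance measure evaluated on translates of a fixed reference set, but care is needed if the density is not continuous, so the argument should rely only on the measure-theoretic positivity guaranteed by Assumption~\ref{distass} rather than on pointwise density bounds.
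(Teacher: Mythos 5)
Your overall strategy—verifying the minorization hypothesis of Theorem~\ref{aperiodicthm} for a set $C$ and a non-trivial measure $\nu$—is exactly the paper's, but your choice $C={\bf X}_f$ introduces a genuine gap. The uniform minorization that you flag as the ``main obstacle'' is not merely delicate for $C={\bf X}_f$; it can fail outright. For $x\in{\bf X}_f$ the law $P(x,\cdot)$ is supported in $\{Ax\}\oplus D\,{\rm supp}(w)$, and since ${\bf W}$ is bounded these supports can be \emph{disjoint} for distinct points of ${\bf X}_f$ whenever ${\bf X}_f$ is large relative to $D{\bf W}$: e.g.\ scalar $A=0.9$, $D=1$, ${\bf W}=[-0.1,0.1]$, ${\bf X}_f=[-1,1]$ (robustly invariant), where the supports at $x=1$ and $x=-1$ are $[0.8,1]$ and $[-1,-0.8]$. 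Any measure dominated by both kernels must then vanish identically, so no non-trivial one-step minorizing measure exists on all of ${\bf X}_f$, and no compactness or continuity argument can produce one. Your first candidate $\nu(B)\defeq\inf_{x\in C}P(x,B)$ does not rescue this: besides being trivial in the example, an infimum of measures over an uncountable family need not be countably additive.

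The correct fix is to shrink $C$, not to tinker with $\nu$. The paper takes $C=\{x:\|x\|\le r\}$ with $r$ small enough that $\Omega=\bigcap_{x\in C}\bigl(D{\bf W}\oplus\{g(x)\}\bigr)$ is non-empty (possible because $g$ is continuous with $g(0)=0$), and bounds $P(x,B)$ from below, uniformly over $x\in C$, by the mass that the law of $Dw$ assigns to $B$ within the common intersection $\Omega$; the condition $\nu(C)>0$ then follows from the disturbance placing positive mass arbitrarily near the origin. You do gesture at ``restricting attention to a small ball around the origin,'' but you present it as a domain for defining $\nu$ while the minorization must still hold for \emph{every} $x\in{\bf X}_f$; it is the smallness of $C$ itself that makes the argument go through. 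Note also that linearity of the terminal dynamics plays no role in this step—only continuity of $g$ at the origin is used—so anchoring the construction to ${\bf X}_f$ buys nothing.
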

\begin{proof}
Let $C = \{x\in{\bf X} : \|x\| \leq r\}$ where the radius $r>0$ is small enough that the set $\Omega=\cap_{x\in C}\bigl(D{\bf W}\oplus \{g(x)\}\bigr)$ is non-empty (existence of sufficiently small $r$ is ensured by the assumption that $g(\cdot)$ is continuous with $g(0)=0$). The transition probability function then satisfies, for all $x\in C$,
\begin{align*}
P(x,B)&={\mathbb P}\bigl(g(x)+Dw\in B\bigr)\\
      &=\int_{D{\bf W}} 1_{B}(g(x)+\tilde{w}) \gamma(d\tilde{w}) \\
&=\int_{D{\bf W}\oplus \{g(x)\}} 1_{B}(\tilde{w}) \gamma(d\tilde{w}) \\
& \ge \int_{\Omega} 1_{B}(\tilde{w}) \gamma(d\tilde{w}) \triangleq \mu_{\tilde{W}}(B)
\end{align*}
where $\tilde{w}=Dw$ and $\gamma(\cdot)$ is the probability distribution of $\tilde{w}$. Thus, $C$ satisfies the conditions of Theorem \ref{aperiodicthm} with $\nu=\mu_{\tilde{W}}$, thus proving that ${\bf x}$ is an aperiodic Markov chain.
\end{proof}

Proposition~\ref{prop1} and Lemma~\ref{lem3} allow us to invoke Theorem~\ref{markovconv}, which directly implies the following theorem.

\begin{thm}\label{mainthmgeneral}
Under Assumptions \ref{distass}, \ref{terminalass} and \ref{lyapunov}, system \eqref{eq1} satisfies \eqref{measconv} and \eqref{LLN}.
\end{thm}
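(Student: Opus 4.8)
The plan is to assemble the preceding results and verify, one by one, the four itemized hypotheses of Theorem~\ref{markovconv} together with its $\varphi$-irreducibility preamble; once these are in place the two conclusions \eqref{measconv} and \eqref{LLN} follow at once. Because all of the substantive work has already been carried out in the earlier lemmas and propositions, the proof is essentially bookkeeping: confirming that each object constructed earlier plays exactly the role demanded by the cited convergence theorem.

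First I would check the preamble and hypothesis~(i). By \eqref{eq1} the chain is generated by $x_{k+1}=f(x_k,w_k)=g(x_k)+Dw_k$ with $g(\cdot)$ continuous, so $f(\cdot,\cdot)$ is continuous on ${\bf X}\times{\bf W}$, and Assumption~\ref{distass} makes ${\bf w}$ an i.i.d.\ disturbance sequence; hence ${\bf x}$ is a time-homogeneous Markov chain of the form required by~(i). The $\varphi$-irreducibility demanded in the preamble is precisely the conclusion of the first theorem of Section~\ref{sec:results}, with $\varphi=\pi$ the invariant measure of the terminal linear dynamics~\eqref{lineardynamics}.

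Next I would dispatch the remaining conditions by direct citation. Hypothesis~(ii), aperiodicity, is Lemma~\ref{lem3}. Hypothesis~(iii), that ${\rm supp}(\varphi)$ has non-empty interior, is Proposition~\ref{prop1}(i). Hypothesis~(iv) is Proposition~\ref{prop1}(ii): the rescaled function $\tilde V(x)=V(x)/(d'-d)$ is measurable with compact sublevel sets and satisfies the drift inequality \eqref{LyapMC} on the compact set $C=C_{\alpha_3}(d')$ for every $x\in{\bf X}$. With every requirement verified, Theorem~\ref{markovconv} applies to ${\bf x}$ and delivers both the total-variation convergence \eqref{measconv} to $\pi$ and the law of large numbers \eqref{LLN}, which is exactly the assertion.

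The step needing the most care is not deep but is a matter of matching objects: one must ensure that the measure $\varphi$ used for irreducibility and in~(iii) is the \emph{same} $\pi$ whose support and invariance are invoked, and that the function entering~(iv) is the rescaled $\tilde V$ rather than the original ISS-Lyapunov function $V$. Since Theorem~\ref{markovconv} is quoted verbatim from \cite{meynandtweedie2009}, the only genuine obstacle would be a discrepancy between the measurability and state-space conventions of that reference and those fixed in Section~\ref{sec:prelims}; once these are checked to agree, the implication is immediate.
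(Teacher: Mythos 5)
Your proposal is correct and follows exactly the paper's own route: the paper likewise obtains Theorem~\ref{mainthmgeneral} by combining the $\varphi$-irreducibility theorem, Lemma~\ref{lem3}, and Proposition~\ref{prop1} to verify the hypotheses of Theorem~\ref{markovconv}. Your additional care in matching $\varphi$ with $\pi$ and in noting that the drift condition holds for the rescaled $\tilde V$ is a welcome (and accurate) elaboration of what the paper leaves implicit.
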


As will be seen in the upcoming sections of the paper, it is relevant to analyse the particular case when the dynamics in the terminal mode are linear. The following corollary presents a direct consequence of Theorem \ref{mainthmgeneral}.

\begin{cor}\label{maincor}
  Under Assumptions \ref{distass}, \ref{terminalass} and \ref{lyapunov},
  system~\eqref{eq1} satisfies%
\[
\lim_{k\rightarrow\infty}{\mathbb P}\{ x_k\in {\bf X}_\infty \}=1,
\qquad 
{\bf X}_\infty=\smash[t]{\bigoplus_{k=0}^\infty} A^kD{\bf W}.
\]
\end{cor}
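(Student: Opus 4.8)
The plan is to leverage Theorem~\ref{mainthmgeneral}, which already establishes \eqref{measconv}, namely that the state distribution $P^k(x,\cdot)$ converges in total variation to the invariant measure $\pi(\cdot)$ of the chain. The key observation is that, under Assumption~\ref{terminalass}, the invariant measure $\pi$ coincides with the stationary distribution of the terminal \emph{linear} system \eqref{lineardynamics}. As noted in the excerpt following Assumption~\ref{terminalass}, this stationary measure is precisely the law of $\sum_{k=0}^\infty A^k D w_k$, and it is supported on the minimal robustly invariant set ${\bf X}_\infty=\bigoplus_{k=0}^\infty A^k D{\bf W}$. Hence $\pi({\bf X}_\infty)=1$.

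First I would make explicit why the invariant measure of the nonlinear chain \eqref{eq1} equals that of the linear chain \eqref{lineardynamics}. Since ${\bf X}_f$ is robustly invariant and the dynamics on ${\bf X}_f$ are exactly $x_{k+1}=Ax_k+Dw_k$, and since ${\bf X}_\infty\subseteq{\bf X}_f$ (because ${\bf X}_\infty$ is the minimal invariant set for the linear dynamics and ${\bf X}_f$ is an invariant set containing it), the restriction of the transition kernel $P(x,\cdot)$ to ${\bf X}_\infty$ agrees with the linear kernel. A measure supported on ${\bf X}_\infty$ that is invariant for the linear dynamics is therefore also invariant for \eqref{eq1}. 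By the uniqueness of the invariant measure guaranteed by Theorem~\ref{markovconv} (the limit in \eqref{measconv} identifies $\pi$ uniquely), this $\pi$ must be \emph{the} invariant measure of the nonlinear chain, so $\pi({\bf X}_\infty)=1$.

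Second, I would apply \eqref{measconv} to the particular event ${\mathcal A}={\bf X}_\infty$. The supremum over ${\mathcal A}\in{\mathcal B}({\bf X})$ in \eqref{measconv} dominates the single term, so
\[
\bigl\lvert P^k(x,{\bf X}_\infty)-\pi({\bf X}_\infty)\bigr\rvert
\le \sup_{{\mathcal A}\in{\mathcal B}({\bf X})}\bigl\lvert P^k(x,{\mathcal A})-\pi({\mathcal A})\bigr\rvert
\xrightarrow[k\to\infty]{}0.
\]
Since $P^k(x,{\bf X}_\infty)={\mathbb P}\{x_k\in{\bf X}_\infty\}$ and $\pi({\bf X}_\infty)=1$, this yields $\lim_{k\to\infty}{\mathbb P}\{x_k\in{\bf X}_\infty\}=1$, which is the claim.

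The main obstacle I anticipate is the identification step: rigorously arguing that the unique invariant measure of the nonlinear chain is the linear stationary measure supported on ${\bf X}_\infty$, rather than some other measure with mass outside ${\bf X}_\infty$. The subtlety is that the nonlinear chain can start anywhere in ${\bf X}$ and its transitions off ${\bf X}_f$ are governed by $g(\cdot)$, not $A$; one must confirm that the invariant measure assigns no mass to ${\bf X}\setminus{\bf X}_\infty$. This follows once we know invariance plus uniqueness pin down $\pi$ as the linear stationary law, but the argument hinges on the fact that ${\bf X}_\infty$ is absorbing for the terminal dynamics and that Theorem~\ref{markovconv} delivers a \emph{unique} $\pi$; everything else is a one-line specialization of \eqref{measconv}.
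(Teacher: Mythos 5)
Your proposal is correct and follows essentially the same route as the paper's own proof: invoke Theorem~\ref{mainthmgeneral} for \eqref{measconv}, identify $\pi$ as the stationary law of the linear terminal dynamics so that $\pi({\bf X}_\infty)=1$, and specialize the total-variation limit to the single set ${\mathcal A}={\bf X}_\infty$. The only difference is that you spell out the identification step (that ${\bf X}_\infty\subseteq{\bf X}_f$, that the kernel restricted to ${\bf X}_\infty$ is the linear one, and that uniqueness of $\pi$ from Theorem~\ref{markovconv} pins it down) in more detail than the paper, which simply asserts it; this is a welcome clarification rather than a different argument.
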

\vspace{2mm}

\begin{proof}
Equation \eqref{measconv} is satisfied due to Theorem \ref{mainthmgeneral}. Since $f(x,w)=Ax+Dw$ for $x\in {\bf X}_f$, the unique invariant measure $\pi$ is the measure of $X_\infty=\sum_{k=0}^\infty A^kDw_k$. By choosing ${\mathcal A}={\bf X}_\infty$, where ${\bf X}_\infty$ is the support of $X_\infty$ and clearly $\pi({\bf X}_\infty)=1$, it follows that $\lim_{k\to\infty}P^k(x,{\bf X}_\infty)=1$ which is the desired result.
\end{proof}

Thus, under Assumptions \ref{distass}, \ref{terminalass} and \ref{lyapunov}, it has been demonstrated that for the nonlinear system \eqref{eq1}, the probability distribution of the state $x_k$ and the time-average value of $h(x_k)$ in \eqref{LLN} converge to that of the terminal mode. This implies that, under the assumption that the terminal mode dynamics are linear, the nonlinear system converges to the minimal invariant set of the linear terminal mode dynamics.

\section{Implications for stability and convergence of stochastic MPC to a terminal control law}\label{sec:applications}

In this section we use the preceding results to obtain convergence results for two MPC formulations. The first of these employs a control policy with affine dependence on the disturbance input~\cite{goulartetal2006,goulartandkerrigan2008}. Convergence to a minimal invariant set for a variant of this formulation was proved in~\cite{wangetal2008} by redefining the MPC cost function and control policy. The second formulation also uses a control policy that is affine in the disturbances, but the feedback law has a different structure (striped, extending over an infinite prediction horizon) with gains computed offline \cite{kouvaritakisetal2013}. Convergence for this formulation is shown here for the first time.

For both strategies we consider a system given by 
\begin{equation}\label{lindyn}
x_{k+1}=Ax_k+Bu_k+Dw_k
\end{equation}
where $A,B,D$ are matrices  of conformal dimensions, and a measurement of the current state, $x_k$, is available at time $k$. It is assumed that $(A,B)$ is stabilizable. Also, the disturbance sequence ${\bf w}\defeq\{w_k\in{\bf W}: k\in {\mathbb N}\}$ is i.i.d., ${\mathbb E}\{w_k \}=0$, and the probability distribution of $w_k$ is finitely supported in ${\bf W}$, a bounded set that contains the origin in its interior. These assumptions are part of the setting of \cite{goulartetal2006,goulartandkerrigan2008,kouvaritakisetal2013}. Here we additionally assume ${\mathbb P}\{||w||\le \lambda \}>0$ for all $\lambda>0$.

\subsection{Affine in the disturbance stochastic MPC}\label{sec:da}

Consider the setting of \cite{goulartandkerrigan2008}, which extends \cite{goulartetal2006} by assuming stochastic disturbances and using an expected value cost.
The control policy is affine in the disturbance and enforces constraint satisfaction robustly (i.e.\ for all disturbance realizations).
%
The state and input are subject to constraints
\begin{equation}\label{goulart_cons}
(x_k,u_k)\in {\bf Z}, \quad k\in {\mathbb N}
\end{equation}
where ${\bf Z}\subseteq {\mathbb R}^{n_x}\times{\mathbb R}^{n_u}$ is a convex and compact set containing the origin in its interior. 
At each discrete time instant $k$ a stochastic optimal control problem is solved. To avoid the computational burden of optimizing over general feedback policies, the predicted control inputs are parameterized as
\[
  u_{i|k} = v_{i|k}+\sum_{j=0}^{i-1}M_{i,j}w_{j|k},\quad i\in\mathbb{N}_{N-1} ,
\]
where the open-loop inputs $v_{i|k}$, $i\in\mathbb{N}_{N-1}$ and the disturbance feedback gains $M_{i,j}$, $j\in\mathbb{N}_{i-1}$, $i\in\{1,\ldots, N-1\}$ are optimization variables computed online, and $u_{i|k}=Kx_{i|k}$ for $i\ge N$, where $A+BK$ is Schur stable.

%

The cost function is given by
\begin{equation}\label{goulart_cost}
  J = {\mathbb E} \Bigl\{x_{N|k}^\top P x_{N|k}+\sum_{i=0}^{N-1} \bigl(
    x_{i|k}^\top Q x_{i|k} + u_{i|k}^\top R u_{i|k} \bigr) \Bigr\},
\end{equation}
where $Q\succeq 0$, $R\succ 0$, $P\succ 0$ and $(A,Q^{1/2})$ is assumed detectable. Although \cite{goulartetal2006,goulartandkerrigan2008} only requires that 
$P$ satisfies a Lyapunov inequality for the terminal dynamics,
here we require that $P$ and $K$ satisfy the algebraic Riccati equation $P=Q+A^\top PA-K^\top(R+B^\top PB)K$, $K=-(R+B^\top PB)^{-1}B^\top PA$. 
A terminal constraint is included in the optimal control problem:
\[
x_N\in{\bf X}_f,
\]
where ${\bf X}_f$ is a positively robust invariant set for system \eqref{lindyn} under the control law $u=Kx$ and constraints \eqref{goulart_cons}. 

The optimal control problem solved at each instant $k$ is:
\begin{equation}\label{oc_goulart}
\begin{aligned}
\underset{\vec{u}_k,\vec{x}_k,\theta_k}{\min}\ \ 
& J \\
\text{subject to}\ \  
& x_{i+1|k}=Ax_{i|k}+Bu_{i|k}+Dw_{i|k}   \\
& u_{i|k}=v_{i|k}+\sum_{j=0}^{i-1}M_{i,j}w_{j|k}
 \\
& (x_{i|k},u_{i|k})\in{\bf Z} \\
& x_{0|k}=x_k, \quad x_{N|k}\in {\bf X}_f\\
& \forall \vec{w}_k \in{\bf W}\times \cdots\times{\bf W} ,\ \forall i\in\mathbb{N}_{N-1}
\end{aligned}
\end{equation}
where $\vec{y}_k$ denotes a predicted sequence $\left\{y_{0|k},\ldots,y_{N-1|k} \right\}$ and $\theta_k=\bigl\{\vec{v}_k,\,M_{i,j},j\in\mathbb{N}_{N-1},\,i\in\{1,\ldots,N-1\}\bigr\}$. 
For polytopic or ellipsoidal ${\bf W}$, this problem is a convex QP or SDP \cite{goulartandkerrigan2008}. The control law is defined for all $x_k\in{\bf X}$, the set of states of (\ref{lindyn}) on which~\eqref{oc_goulart} is feasible, 
as $u_k=v_{0|k}^*(x_k)$, where $(\cdot)^*$ denotes an optimal solution of~(\ref{oc_goulart}). This control input is applied as a receding horizon control law, and the closed-loop system, with state space ${\bf X}\ni x_k$, is given by
\begin{equation}\label{eq:goulart_dynamics}
x_{k+1}=Ax_k+Bv_{0|k}^*(x_k)+Dw_k.
\end{equation}

\begin{assum}\label{ass_final_goulart}
The pair $(A+BK,D)$ is controllable.
\hfill$\blacktriangleleft$
\end{assum}


\begin{prop}\label{asspropgoulart}
Assumptions \ref{distass}, \ref{terminalass}  and \ref{lyapunov}  hold for~(\ref{eq:goulart_dynamics}).
\end{prop}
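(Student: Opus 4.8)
The plan is to verify Assumptions \ref{distass}, \ref{terminalass} and \ref{lyapunov} in turn for the closed loop \eqref{eq:goulart_dynamics}, which is of the form \eqref{eq1} with $g(x)=Ax+Bv_{0|k}^*(x)$ and the same matrix $D$. Assumption \ref{distass} requires no work: the disturbance hypotheses imposed throughout this section (i.i.d., zero mean, support contained in the bounded set $\mathbf{W}$ with $0\in{\rm int}(\mathbf{W})$), together with the extra condition $\mathbb{P}\{\|w\|\le\lambda\}>0$ for all $\lambda>0$ assumed here, are precisely the content of Assumption \ref{distass}. The substance of the proof therefore lies in identifying a linear terminal mode (Assumption \ref{terminalass}) and in exhibiting an ISS-Lyapunov function (Assumption \ref{lyapunov}).

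For Assumption \ref{terminalass} I would take $\mathbf{X}_f$ to be the terminal set already present in \eqref{oc_goulart}, which by construction is robustly positively invariant for \eqref{lindyn} under $u=Kx$ and respects \eqref{goulart_cons}. The central claim is that the receding-horizon law coincides with the linear law on this set, $v_{0|k}^*(x)=Kx$ for every $x\in\mathbf{X}_f$, so that the closed loop reduces to $x_{k+1}=(A+BK)x_k+Dw_k$. Granting this, Assumption \ref{terminalass} holds with the role of $A$ played by the Schur matrix $A+BK$: part (i) follows because on $\mathbf{X}_f$ the closed loop is exactly $(A+BK)x+Dw$ and $\mathbf{X}_f$ is robustly invariant under $u=Kx$, and part (ii) holds since $A+BK$ is Schur and $(A+BK,D)$ is controllable by Assumption \ref{ass_final_goulart}. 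I would establish the central claim in two steps. \emph{Feasibility}: as $x_{i|k}$ is affine in $(w_{0|k},\dots,w_{i-1|k})$, the policy $u_{i|k}=Kx_{i|k}$ belongs to the admissible disturbance-feedback class, and robust invariance keeps every $x_{i|k}\in\mathbf{X}_f$ for $x\in\mathbf{X}_f$, so all stage constraints and the terminal constraint $x_{N|k}\in\mathbf{X}_f$ hold for every disturbance realization. \emph{Optimality}: because $P$ and $K$ satisfy the algebraic Riccati equation, the Riccati recursion sits at its fixed point and $u_{i|k}=Kx_{i|k}$ is the unconstrained minimizer of \eqref{goulart_cost} over all causal policies, and in particular over the affine-in-disturbance subclass that contains it; being feasible for \eqref{oc_goulart} on $\mathbf{X}_f$, it is therefore also the constrained optimum, forcing $v_{0|k}^*(x)=Kx$.

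For Assumption \ref{lyapunov} I would take the optimal value function $V(x)=J^*(x)$ of \eqref{oc_goulart} as the candidate and invoke Lemma \ref{lemiss2}. The feasible set $\mathbf{X}$ is compact, being a closed subset of the bounded state-projection of the compact constraint set $\mathbf{Z}$, so every sublevel set of the continuous convex function $V$ is compact and a $\mathcal{K}_\infty$ lower bound \eqref{lyapkappa2} is easily arranged; the nominal decrease \eqref{isslyap2}, $V(f(x,0))-V(x)\le-\alpha_3(\|x\|)$, comes from the standard shifted-candidate argument, and the regularity needed for case (i) of Lemma \ref{lemiss2} follows from Lipschitz continuity of the QP minimizer $v_{0|k}^*$ together with linearity of $Dw$ in $w$. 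Alternatively, ISS of this expected-value-cost formulation may be imported wholesale from \cite{goulartandkerrigan2008}.

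The main obstacle I anticipate is concentrated where the expectation meets the deterministic machinery. In Assumption \ref{terminalass} it is the optimality step: one must argue cleanly that the expected-cost optimum over the parameterized policy class coincides with the stationary LQR law on $\mathbf{X}_f$, which rests on the Riccati identity making the expected cost telescope and on the LQR law lying inside the affine-in-disturbance class. In Assumption \ref{lyapunov} the delicate point is that the expected-value cost gives $V(0)>0$, since the irreducible disturbance variance contributes a state-independent cost; consequently $V$ does not literally obey an upper bound $V(x)\le\alpha_2(\|x\|)$ with $\alpha_2\in\mathcal{K}_\infty$, and this mismatch must be resolved either by de-biasing $V$ or by appealing directly to the ISS result of \cite{goulartandkerrigan2008}.
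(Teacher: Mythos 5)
Your proposal is correct and follows essentially the same route as the paper: Assumption~\ref{distass} from the disturbance hypotheses stated below \eqref{lindyn}, Assumption~\ref{terminalass} by identifying ${\bf X}_f$ as the terminal set on which the Riccati pair $(P,K)$ forces $v_{0|k}^*(x)=Kx$ (with $A+BK$ Schur, ${\bf X}_f$ bounded via ${\bf Z}$, and $(A+BK,D)$ controllable by Assumption~\ref{ass_final_goulart}), and Assumption~\ref{lyapunov} via an ISS-Lyapunov function. The only divergence is in the last step: the paper simply cites \cite{goulartandkerrigan2008} for an ISS-Lyapunov function satisfying Lemma~\ref{lemiss} rather than constructing one, which sidesteps the $V(0)>0$ obstruction you rightly flag for the raw expected-value cost; your fallback of importing the ISS result wholesale is exactly what the paper does.
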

\begin{proof}
Assumption \ref{distass} holds according to the conditions below \eqref{lindyn}.
Furthermore, ${\bf X}_f\subseteq{\bf X}$ is invariant for \eqref{eq:goulart_dynamics} (since $P,K$ satisfy a Riccati equation, so the MPC law is $u_k=Kx_k$ for all $x_k\in{\bf X}_f$) and bounded (since $\bf Z$ is bounded); also $A+BK$ is necessarily stable and the pair $(A+BK,D)$ is controllable by Assumption~\ref{ass_final_goulart}, which implies that Assumption \ref{terminalass} holds.
Finally, it is shown in \cite{goulartandkerrigan2008} that there is an ISS-Lyapunov function for system \eqref{eq:goulart_dynamics} that satisfies the conditions of Lemma \ref{lemiss}, thus satisying Assumption \ref{lyapunov}.
\end{proof}

We can now give the main convergence results.

\begin{thm}\label{final_goulart}
For any $x_0\in{\bf X}$,
the closed-loop system \eqref{eq:goulart_dynamics} satisfies
$\lim_{k\rightarrow\infty}{\mathbb P}\{ x_k\in \bigoplus_{j=0}^\infty (A+BK)^jD{\bf W}\}=1$
and
\[
\lim_{k\rightarrow\infty}\frac{1}{k}\sum_{j=1}^k
(x_j^{\top\!} Q x_j + u_j^{\!\top\!} R u_j)
\stackrel{a.s.}{=} 
\lim_{k\rightarrow\infty} {\mathbb E}\bigl\{\xi_k^{\top\!}(Q+K^{\!\top\!\!} RK)\xi_k\bigr\}
\]
where $\xi_{k+1} = (A+BK)\xi_k + D w_k$ for all $k\in\mathbb{N}$ and $\xi_0=x_0$.
\end{thm}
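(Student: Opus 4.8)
The plan is to obtain both limits as specialisations of the general machinery of Section~\ref{sec:results} to the closed-loop dynamics \eqref{eq:goulart_dynamics}, via Proposition~\ref{asspropgoulart}. Since that proposition guarantees that Assumptions~\ref{distass}, \ref{terminalass} and~\ref{lyapunov} hold for \eqref{eq:goulart_dynamics}, and since the terminal linear dynamics of this closed loop are $x_{k+1}=(A+BK)x_k+Dw_k$ (the MPC law reduces to $u_k=Kx_k$ on ${\bf X}_f$ because $P$, $K$ solve the Riccati equation), the role played by the matrix $A$ in Corollary~\ref{maincor} is taken here by $A+BK$. Applying Corollary~\ref{maincor} then yields the first limit immediately, with ${\bf X}_\infty=\bigoplus_{j=0}^\infty (A+BK)^j D{\bf W}$.

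For the second limit I would invoke the Law of Large Numbers~\eqref{LLN} of Theorem~\ref{mainthmgeneral}. First I would note that the applied input is the state feedback $u_k=v_{0|k}^*(x_k)$, so the realised stage cost is a function of the state alone: define $\ell(x)\defeq x^\top Q x + \bigl(v_0^*(x)\bigr)^\top R\, v_0^*(x)$, which is measurable as the realised cost associated with the optimiser of the parametric QP~\eqref{oc_goulart}, and is bounded on compact sets. The time-average on the left of the claimed identity then equals $\frac1k\sum_{j=1}^k \ell(x_j)$, and since $\pi$ is supported on the bounded set ${\bf X}_\infty$ the integrability hypothesis ${\mathbb E}_\pi(|\ell|)<\infty$ holds, so \eqref{LLN} gives $\frac1k\sum_{j=1}^k \ell(x_j)\stackrel{a.s.}{=}{\mathbb E}_\pi\{\ell(x)\}$ in the limit.

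The crucial reduction is the evaluation of ${\mathbb E}_\pi\{\ell\}$. Because ${\bf X}_\infty$ is the minimal robustly invariant set of the terminal dynamics while ${\bf X}_f$ is robustly invariant for them, we have ${\bf X}_\infty\subseteq{\bf X}_f$; on ${\bf X}_f$ the MPC optimiser (unique, since $R\succ0$) satisfies $v_0^*(x)=Kx$, so $\ell(x)=x^\top(Q+K^\top R K)x$ for $\pi$-almost every $x$, whence ${\mathbb E}_\pi\{\ell\}={\mathbb E}_\pi\{x^\top(Q+K^\top RK)x\}$. Finally, as $\pi$ is the invariant measure of the Schur-stable recursion $\xi_{k+1}=(A+BK)\xi_k+Dw_k$, which converges in distribution to $\pi$ with uniformly bounded state (since ${\bf W}$ is bounded), the bounded continuous integrand lets me pass the limit through the expectation to obtain ${\mathbb E}_\pi\{x^\top(Q+K^\top RK)x\}=\lim_{k\to\infty}{\mathbb E}\{\xi_k^\top(Q+K^\top RK)\xi_k\}$, closing the chain of equalities.

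The step I expect to be the main obstacle is the passage on the support of the invariant measure: one must argue carefully that ${\bf X}_\infty\subseteq{\bf X}_f$ so that the MPC feedback coincides $\pi$-almost everywhere with the terminal law $Kx$, and then justify the interchange of limit and expectation relating ${\mathbb E}_\pi\{\cdot\}$ to $\lim_k{\mathbb E}\{\xi_k^\top(\cdot)\xi_k\}$. The boundedness of ${\bf W}$, and hence of ${\bf X}_\infty$ and of the trajectories $\xi_k$, is precisely what makes this interchange legitimate; the remaining steps are routine applications of the results already established.
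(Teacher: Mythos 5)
Your proof is correct and takes essentially the same route as the paper, whose own proof is a one-line invocation of Proposition~\ref{asspropgoulart} followed by Theorem~\ref{mainthmgeneral} and Corollary~\ref{maincor} applied to the closed loop \eqref{eq:goulart_dynamics} with terminal matrix $A+BK$. You merely supply the details the paper leaves implicit --- writing the stage cost as a measurable function $\ell(x)=x^\top Qx+\bigl(v_0^*(x)\bigr)^\top R\,v_0^*(x)$ of the state alone, noting ${\bf X}_\infty\subseteq{\bf X}_f$ so that $v_0^*(x)=Kx$ holds $\pi$-almost everywhere, and justifying the identification of ${\mathbb E}_\pi\{x^\top(Q+K^\top RK)x\}$ with $\lim_{k\to\infty}{\mathbb E}\{\xi_k^\top(Q+K^\top RK)\xi_k\}$ --- and these steps are all sound.
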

\begin{proof}
By Proposition \ref{asspropgoulart}, the MPC formulation considered in this section satisfies the conditions of Theorem~\ref{mainthmgeneral} and Corollary~\ref{maincor}, thus implying the convergence results of the theorem.
\end{proof}

\begin{rem}
  We highlight that this convergence result was demonstrated for the affine in the disturbance policy in \cite{wangetal2008}, which required a redefinition of the cost function. On the other hand, this result is obtained here by including two assumptions: (a)~that the disturbance has a non-zero probability of lying in $\{w : \|w\| \leq \lambda\}$
  for all $\lambda > 0$, and (b)~that $(A+BK,D)$ is controllable. 
\end{rem}

\subsection{Striped affine in the disturbance stochastic MPC}\label{sec:sda}

Consider now the stochastic MPC formulation of \cite{kouvaritakisetal2013}. This also considers a feedback law that is affine in the disturbance, but there are several differences with the parameterization of Section~\ref{sec:da}. First, the control policy includes a fixed gain state feedback law. Second, disturbance feedback gains are computed offline in order to reduce online computational burden. Hence robust constraints are imposed on predicted states and inputs via tightening parameters computed offline. Third, the disturbance feedback has a striped structure that ensures recursive feasibility while extending the disturbance compensation across an infinite horizon.


States and controls are subject to probabilistic constraints
\begin{equation}\label{kouvaritakis_cons}
{\mathbb P}\{f^\top x_{k+1}+ g^\top u_k\le 1 \}\ge p,
\end{equation}
where $g\in {\mathbb R}^{n_x}$, $f\in{\mathbb R}^{ n_u}$, $p\in(0, 1]$. Although \eqref{kouvaritakis_cons} is scalar, multiple constraints of this type can apply simultaneously.

Predicted control inputs have the structure:
\begin{align*}
u_{i|k}&=Kx_{i|k}+c_{i|k}+\sum_{j=1}^{i-1}L_{j}w_{i-j|k}, \quad i\in\mathbb{N}_{N-1} \\
u_{i|k}&=Kx_{i|k}+\sum_{j=1}^{N-1}L_{j}w_{i-j|k}, \quad i=N+l, \ l\in\mathbb{N}
\end{align*}
where $c_{i|k}$, $i\in\mathbb{N}_{N-1}$ are optimization variables and $K$ is a stabilizing feedback gain. The gains $L_{j}$, $j\in\{1,\ldots,N-1\}$ are computed offline so as to minimize a set of constraint tightening parameters that bound the effects of disturbances on constraints
(see~\cite{kouvaritakisetal2013} for details).

The cost function is given by
\begin{equation}\label{kouvaritakis_cost}
J={\mathbb E}\Bigl\{\sum_{i=0}^{\infty} \bigl( x_{i|k}^\top Qx_{i|k}+u_{i|k}^\top Ru_{i|k} -L_{ss}\bigr) \Bigr\},
\end{equation}
where $Q,R\succ 0$ and
$L_{ss}=\lim_{i\to\infty}\! {\mathbb E} ( x_{i|k}^\top\! Qx_{i|k}\!+\! u_{i|k}^\top Ru_{i|k})$ is computed using the terminal mode dynamics and the distribution of the disturbance input.
The gain $K$ satisfies the algebraic Riccati equation, $P=Q+A^\top PA-K^\top(R+B^\top PB)K$, $K=-(R+B^\top PB)^{-1}B^\top PA$. 

The optimal control problem solved at each instant $k$ is:
\begin{equation}\label{oc_kouvaritakis}
  \begin{aligned}
\underset{\vec{u}_k,\vec{x}_k,\vec{c}_k}{\min} \ \ 
& J \\
\text{subject to} \ \
& x_{i+1|k}=Ax_{i|k}+Bu_{i|k}+Dw_{i|k}  \\
& u_{i|k}=c_{i|k}+Kx_{i|k}+\sum_{j=1}^{i-1}L_{j}w_{i-j|k}\\
& {\mathbb P}\{ f^\top x_{i+1|k} + g^\top u_{i|k}\le 1 \} \ge p
\\
& x_{0|k}=x_k \\
& \forall \vec{w}_k \in{\bf W}\times \cdots\times{\bf W} ,\ \forall i\in\mathbb{N}_{N+N_2-1}
\end{aligned}
\end{equation}
where $N_2$ is chosen to be large enough to ensure constraint satisfaction over an infinite prediction horizon.
%
%
The control law is defined by $u_k=c_{0|k}^*(x_k)+Kx_k$ and the procedure is repeated at each sampling instant according to a receding horizon scheme. Thus the closed-loop system is given by
\begin{equation}\label{dynamics_kouvaritakis}
x_{k+1}=(A+BK)x_k+Bc^*_{0|k}(x_k) + Dw_k.
\end{equation}


Since $K$ is the unconstrained optimal feedback gain, we would like to know whether $u_k$ converges to $Kx_k$. A bound of the form $\lim_{k\rightarrow\infty}\mathbb{E}\bigl\{x_{k}^\top Qx_{k}+u_{k}^\top Ru_{k}\bigr\}\le L_{ss}$
is derived in~\cite[Thm.~4.3]{kouvaritakisetal2013} for (\ref{dynamics_kouvaritakis}), where $L_{ss}=l_{ss}+{\mathbb E}\{w^\top P_cw \}$ for some $P_c\succeq 0$, and where $l_{ss}$ is the asymptotic stage cost
${l_{ss}=\lim_{k\rightarrow\infty}{\mathbb E}\left\{x_k^\top(Q+K^\top RK)x_k\right\}}$ for the system~\eqref{lindyn} under $u_k=Kx_k$.
Thus~\cite{kouvaritakisetal2013} provides an asymptotic bound on closed-loop performance, but since $L_{ss} \geq l_{ss}$, this bound does not imply convergence to control law $u_k=Kx_k$.

It is shown in \cite{kouvaritakisetal2013} that if the disturbance is small enough so that, for some $\bar{k}$, $c_{0|\bar{k}}^*(x_{\bar{k}})=0$ and thus $u_k=Kx_k$, then $c_{0|k}^*=0$ for all $k\ge \bar{k}$. This defines a terminal control law within a robustly invariant set ${\bf X}_f$ (namely $\{x \in \mathbb{R}^{n_x}: c_{0|k}^*(x_k)=0\}$), where the closed-loop dynamics are linear.
In order to use the analysis of Section~\ref{sec:results} to show that the state of (\ref{dynamics_kouvaritakis}) converges to ${\bf X}_f\subseteq {\bf X}$, without loss of generality we make the following assumption.


\begin{assum}\label{assboundedstriped}
The state space of~(\ref{dynamics_kouvaritakis}): ${\bf X}=\{x \in \mathbb{R}^{n_x}: \text{(\ref{oc_kouvaritakis}) is feasible}$ $\text{with } x_k=x \}$, is bounded.
\hfill$\blacktriangleleft$
\end{assum}


Note that artificial constraints (chosen so as to always be inactive in practice) may be included in the problem formulation in order to guarantee the boundedness of ${\bf X}$.

We also need some further analysis of the results of \cite{kouvaritakisetal2013}.

\begin{prop}\label{prop3}
(i) The minimization of \eqref{kouvaritakis_cost} is equivalent to the minimization, for suitable matrices $P_x,P_c\succ 0$, of 
\begin{equation}\label{eq_eqcost}
J\defeq x_k^\top P_xx_k+\vec{c}_k^{\, \top} P_c\vec{c}_k.
\end{equation}
(ii) Let $V(x_k)$ be the optimal value of $J$ in problem \eqref{oc_kouvaritakis}, i.e. $V(x_k)\defeq x_k^\top P_xx_k+{\vec{c}}_k^{\, *\top}P_c\vec{c}_k^{\, *}$. 
Then the value function difference $\Delta(x_k,w_k)\defeq V(x_{k+1})-V(x_k)$ satisfies
\begin{equation}\label{lyap_kouvaritakis}
\Delta(x_k,0)\le -\begin{bmatrix}x_k \\ c_{0|k}^*  \end{bmatrix}^\top\begin{bmatrix}Q+K^\top RK & K^\top R \\ RK & R+B^\top P_xB  \end{bmatrix}\begin{bmatrix}x_k \\ c_{0|k}^*  \end{bmatrix}
\end{equation}
(iii) $V(x_k)$ is strictly convex, positive definite and Lipschitz continuous on ${\bf X}$.
\end{prop}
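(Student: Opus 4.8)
The plan is to exploit the fact that, since the dynamics \eqref{lindyn} are linear and the predicted inputs are affine in the disturbances, every predicted state and input is affine in $(x_k,\vec{c}_k)$ and in the disturbance realizations $w_{j|k}$. For part~(i) I would split each prediction into a nominal component, a function of $(x_k,\vec{c}_k)$ alone, and a disturbance component with zero initial condition that is a function of the $w_{j|k}$ alone. Since the disturbances are zero-mean, i.i.d.\ and independent of $(x_k,\vec{c}_k)$, all cross terms between the two components vanish under the expectation in \eqref{kouvaritakis_cost}, so the cost separates into a purely nominal quadratic plus a term depending only on the disturbances. The nominal part is the infinite-horizon LQR cost for $u_i=Kx_i+c_i$ with $c_i=0$ for $i\ge N$; since $(P,K)$ solve the Riccati equation, a completion-of-squares argument collapses it to $x_k^\top P x_k+\sum_{i=0}^{N-1} c_{i|k}^\top(R+B^\top P B)c_{i|k}$, i.e.\ the form \eqref{eq_eqcost} with $P_x,P_c\succ0$. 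The disturbance term is independent of the decision variables, and subtracting $L_{ss}$ is exactly what makes its infinite sum finite; being constant in $(x_k,\vec{c}_k)$ it leaves the minimizer unchanged, which gives the claimed equivalence.

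For part~(ii) I would run the usual receding-horizon value-decrease argument on the nominal closed loop. With $w_k=0$ the successor state is $x_{k+1}=(A+BK)x_k+Bc_{0|k}^*$, and I would construct a feasible candidate at $k+1$ by shifting the optimizer, $\tilde{c}_{i|k+1}=c_{i+1|k}^*$ for $i\le N-2$ and $\tilde{c}_{N-1|k+1}=0$. Bounding $V(x_{k+1})$ by the cost \eqref{eq_eqcost} of this candidate and subtracting $V(x_k)$, the shifted $\vec{c}$-terms telescope and leave only the $c_{0|k}^*$ contribution together with $x_{k+1}^\top P_x x_{k+1}-x_k^\top P_x x_k$. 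I would then substitute $x_{k+1}=(A+BK)x_k+Bc_{0|k}^*$ and apply the Riccati identities $(A+BK)^\top P_x(A+BK)=P_x-Q-K^\top RK$ and $(A+BK)^\top P_x B=-K^\top R$, which recombine the surviving terms into the negative-definite quadratic form in $(x_k,c_{0|k}^*)$ of \eqref{lyap_kouvaritakis}.

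For part~(iii) I would read $V$ off part~(i) as the optimal value of the parametric program that minimizes the jointly strictly convex, positive-definite quadratic $x^\top P_x x+\vec{c}^\top P_c\vec{c}$ over the feasible set of \eqref{oc_kouvaritakis}, which is convex and depends affinely on the parameter $x_k$. Convexity of $V$ follows from partial minimization of a jointly convex function over a convex set; crucially, because the cost carries no cross term between $x$ and $\vec{c}$, one has $V(x)=x^\top P_x x+g(x)$ with $g$ convex, so the strictly convex leading term forces $V$ to be strictly convex. Positive definiteness follows from $V(x)\ge\lambda_{\min}(P_x)\|x\|^2$ and $V(0)=0$, the latter because $\vec{c}=0$ is feasible at the origin. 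Finally, a finite convex function is locally Lipschitz, and since ${\bf X}$ is bounded by Assumption~\ref{assboundedstriped}, standard multiparametric QP results yield Lipschitz continuity of the value function on the compact feasible set.

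The main obstacle I anticipate lies in part~(ii), specifically in certifying that the shifted candidate remains feasible for the probabilistic constraints across the whole infinite prediction horizon; the subsequent algebra is routine once the Riccati identities are in hand. This feasibility is precisely what the striped disturbance-feedback construction of \cite{kouvaritakisetal2013} is engineered to deliver, so I would invoke those recursive-feasibility results rather than reprove them. A secondary subtlety, in part~(i), is verifying that the disturbance-dependent part of the cost is exactly offset by the subtracted $L_{ss}$, so that the reduction to \eqref{eq_eqcost} is exact and not merely up to an unbounded additive constant.
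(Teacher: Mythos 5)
Your proposal follows essentially the same route as the paper's proof, which simply outsources the key facts: part~(i) is delegated to \cite[Thm.~4.2]{kouvaritakisetal2013} via the Lyapunov identity \eqref{lyapstriped}, whose solution is exactly your $P_x=P$ and $P_c$ block-diagonal with blocks $R+B^\top PB$; part~(ii) is obtained by applying \eqref{lyapstriped} to the shifted pair $(x_{k+1},M\vec{c}_k^{\,*})$, which is the image of $(x_k,\vec{c}_k^{\,*})$ under $\Psi$ when $w_k=0$ --- precisely your shift-candidate argument; and part~(iii) comes from multiparametric QP theory \cite{bemporad02} plus boundedness of ${\bf X}$. Your nominal/stochastic decomposition, the vanishing of cross terms under the zero-mean i.i.d.\ assumption, the completion of squares using the Riccati equation, the deferral of shifted-candidate feasibility to the recursive-feasibility results of \cite{kouvaritakisetal2013}, and the strict-convexity argument via partial minimization are all sound and simply make explicit what the paper cites.

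One step deserves care in part~(ii): carrying out the algebra you describe, the telescoping of the $\vec{c}$-terms contributes $-c_{0|k}^{*\top}(R+B^\top P_xB)c_{0|k}^{*}$, while expanding $x_{k+1}^\top P_xx_{k+1}-x_k^\top P_xx_k$ with the Riccati identities contributes $+c_{0|k}^{*\top}B^\top P_xBc_{0|k}^{*}$; these partially cancel, and you land on the quadratic form with $(2,2)$ block $R$, i.e.\ $\Delta(x_k,0)\le-(x_k^\top Qx_k+u_k^\top Ru_k)$ with $u_k=Kx_k+c_{0|k}^{*}$, not the displayed matrix with block $R+B^\top P_xB$. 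The paper's own derivation from \eqref{lyapstriped} yields the same thing, and only the weaker bound is used downstream (in the proof of Theorem~\ref{cor1_kouvaritakis}), so this appears to be a discrepancy in the displayed inequality rather than a flaw in your argument --- but you should not claim that your recombination reproduces \eqref{lyap_kouvaritakis} verbatim.
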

\begin{proof}
(i) The equivalence of minimizing the costs \eqref{kouvaritakis_cost} and \eqref{eq_eqcost} is demonstrated in \cite[Thm.~4.2]{kouvaritakisetal2013} if $P_x,P_c$ satisfy
  \begin{gather}
    \!\!\!\!
    \begin{bmatrix} P_x & 0 \\  0 & P_c \end{bmatrix} - \Psi^\top\begin{bmatrix} P_x & 0 \\ 0 & P_c\end{bmatrix}\Psi  =\begin{bmatrix}\tilde{Q} & K^\top RE \\ E_0^\top RK & E^\top RE   \end{bmatrix} \label{lyapstriped} \\
    \Psi = \begin{bmatrix} A+BK  & BE \\ 0 & M \end{bmatrix} , \nonumber
\end{gather}
where $\tilde{Q}=Q+K^\top RK$ and $E,M$ are such that $E\vec{c}_k =  c_{0|k}$ and  $M\vec{c}_k=
\bigl[c_{1|k}^\top \ \ldots \ c_{N-1|k}^\top \ 0 \bigr]\mbox{}^\top$ for all $\vec{c}_k\in\mathbb{R}^{Nn_u}$.\\
(ii) The bound on the value function difference follows directly from \eqref{dynamics_kouvaritakis} and \eqref{lyapstriped}. \\
(iii) Since $P_x,P_c\succ 0$, \eqref{oc_kouvaritakis} is a strictly convex QP and it follows that $V(\cdot)$ is a continuous, strictly convex piecewise quadratic function~\cite{bemporad02}. Lipschitz continuity of $V(\cdot)$ follows from the boundedness of ${\bf X}$. 
\end{proof}

We use the following result proved in \cite{goulartetal2006} to conclude that $V(x)$ is an ISS-Lyapunov function.

\begin{thm}\label{cor1_kouvaritakis}
The value function $V(x_k)$ is an ISS-Lyapunov function, and the origin is ISS for system \eqref{dynamics_kouvaritakis}.
\end{thm}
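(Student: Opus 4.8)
The plan is to verify the hypotheses of Lemma~\ref{lemiss2} for the closed-loop system~\eqref{dynamics_kouvaritakis}, taking as candidate ISS-Lyapunov function the optimal value function $V(x_k)=x_k^\top P_x x_k + \vec{c}_k^{\,*\top}P_c\vec{c}_k^{\,*}$ from Proposition~\ref{prop3}. Four things must be checked: that the feasible set ${\bf X}$ is a robust positively invariant set containing the origin in its interior; the sandwich bounds $\alpha_1(\|x\|)\le V(x)\le\alpha_2(\|x\|)$; the nominal decrease $V(f(x,0))-V(x)\le-\alpha_3(\|x\|)$; and one of the two regularity conditions (i)/(ii) of Lemma~\ref{lemiss2}. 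The invariance of ${\bf X}$ and the interior condition follow from recursive feasibility of~\eqref{oc_kouvaritakis} together with Assumption~\ref{assboundedstriped}, while Lipschitz continuity of $V$ was already established in Proposition~\ref{prop3}(iii).

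For the lower bound, since $P_x,P_c\succ0$ and the quadratic term in $\vec{c}_k^{\,*}$ is non-negative, I would take $\alpha_1(s)=\lambda_{\min}(P_x)\,s^2$, so that $V(x)\ge\lambda_{\min}(P_x)\|x\|^2$; this is a ${\mathcal K}_\infty$-function. For the upper bound, using $V(0)=0$ and the Lipschitz constant $L_V$ of $V$ from Proposition~\ref{prop3}(iii), one has $V(x)=V(x)-V(0)\le L_V\|x\|$ on ${\bf X}$, so $\alpha_2(s)=L_V s$ (which is ${\mathcal K}_\infty$) suffices. The nominal decrease is where Proposition~\ref{prop3}(ii) does the work: completing the square in the displayed quadratic form gives
\begin{align*}
V(f(x_k,0))-V(x_k)
&\le -\Bigl( x_k^\top Q x_k + (Kx_k+c_{0|k}^*)^\top R (Kx_k+c_{0|k}^*) \\
&\qquad\qquad + (c_{0|k}^*)^\top B^\top P_x B\, c_{0|k}^* \Bigr)\\
&\le -x_k^\top Q x_k \le -\lambda_{\min}(Q)\|x_k\|^2,
\end{align*}
since $R\succ0$ and $P_x\succ0$ make the two discarded terms non-negative. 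As $Q\succ0$, the function $\alpha_3(s)=\lambda_{\min}(Q)\,s^2$ is ${\mathcal K}_\infty$, giving the required nominal dissipation.

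Finally, for the regularity condition I would write $f(x,w)=(A+BK)x+Bc_{0|k}^*(x)+Dw$. Because~\eqref{oc_kouvaritakis} is a strictly convex QP, its minimizer $\vec{c}_k^{\,*}(x)$ — and hence $c_{0|k}^*(x)$ — is a continuous piecewise-affine function of $x$, which is Lipschitz on the bounded set ${\bf X}$; since $w\mapsto Dw$ is linear, $f$ is Lipschitz on ${\bf X}\times{\bf W}$, so condition~(i) of Lemma~\ref{lemiss2} holds. Lemma~\ref{lemiss2} then yields that $V$ is an ISS-Lyapunov function and that the origin is ISS for~\eqref{dynamics_kouvaritakis}.

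I expect the main obstacle to be the regularity step rather than the dissipation step: asserting Lipschitz continuity of the parametric optimizer $c_{0|k}^*(\cdot)$ (and hence of $f$) relies on the piecewise-affine structure of strictly convex multiparametric QPs and on the boundedness of ${\bf X}$ from Assumption~\ref{assboundedstriped}; some care is also needed to confirm $V(0)=0$ and positive definiteness, which hold because $\vec{c}_k^{\,*}=0$ is optimal at $x=0$.
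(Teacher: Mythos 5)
Your proof follows essentially the same route as the paper's: take the optimal value function $V$ from Proposition~\ref{prop3} as the candidate, obtain the sandwich bounds from positive definiteness and Lipschitz continuity, derive the nominal decrease $V(f(x,0))-V(x)\le -x^\top Qx$ from the quadratic form in \eqref{lyap_kouvaritakis} by completing the square, and invoke Lemma~\ref{lemiss2}. The only difference is that you are more explicit than the paper about details it leaves implicit (the concrete choices of $\alpha_1,\alpha_2$, the check that $V(0)=0$, and the verification of regularity condition~(i) via Lipschitz continuity of the piecewise-affine mpQP minimizer on the bounded set ${\bf X}$), all of which is correct.
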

\begin{proof}
Since $V(x)$ is positive definite and Lipschitz continuous for $x\in{\bf X}$, it follows that there exist ${\mathcal K}_\infty$-functions $\alpha_1$ and $\alpha_2$ such that \eqref{lyapkappa2} holds.

Let $f(x_k,w_k)=(A+BK)x_k+Bc^*_{k}+Dw_k$. Then \eqref{lyap_kouvaritakis} yields $V(f(x_k,0))-V(x_k)=\Delta(x_k,0) \le -(x_k^\top Qx_k + u^{\top}_kRu_k)$, which implies $V(f(x_k,0))-V(x_k)\le - x_k^\top Qx_k$. Therefore \eqref{isslyap2} holds with $\alpha_3(
\|x\|)=\frac{1}{2}\lambda_{\min}(Q)\|x\|^2$,  where $\lambda_{\min}(Q)$ is the minimum eigenvalue of $Q$. Therefore Lemma~\ref{lemiss2} implies the desired result. \end{proof}

\begin{prop}\label{asspropkouva}
Assumptions \ref{distass}, \ref{terminalass} and \ref{lyapunov} hold for~(\ref{dynamics_kouvaritakis})
\end{prop}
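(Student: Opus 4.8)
The plan is to verify Assumptions \ref{distass}, \ref{terminalass} and \ref{lyapunov} in turn, following the template of the proof of Proposition \ref{asspropgoulart}. Assumption \ref{distass} is immediate: the disturbance conditions stated below \eqref{lindyn} already require ${\bf w}$ to be i.i.d., zero mean, and finitely (hence boundedly) supported with $0\in{\rm int}({\bf W})$, and we have additionally imposed ${\mathbb P}\{\|w\|\le\lambda\}>0$ for all $\lambda>0$, which together constitute exactly Assumption \ref{distass}.

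For Assumption \ref{terminalass} I would take ${\bf X}_f=\{x : c_{0|k}^*(x)=0\}$, the terminal set identified above. On this set the receding-horizon law reduces to $u_k=Kx_k$, so the closed-loop dynamics \eqref{dynamics_kouvaritakis} collapse to $x_{k+1}=(A+BK)x_k+Dw_k$. This yields part (ii) of Assumption \ref{terminalass}, since $A+BK$ is Schur stable because $K$ is the unconstrained optimal gain solving the Riccati equation, and since $(A+BK,D)$ is controllable, which I would carry as a hypothesis analogous to Assumption \ref{ass_final_goulart}. Part (i), robust invariance of ${\bf X}_f$, is precisely the property established in \cite{kouvaritakisetal2013} (if $c_{0|\bar k}^*=0$ then $c_{0|k}^*=0$ for all $k\ge\bar k$), and boundedness of ${\bf X}_f$ follows from Assumption \ref{assboundedstriped} because ${\bf X}_f\subseteq{\bf X}$ with ${\bf X}$ bounded.

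Assumption \ref{lyapunov} is then discharged by Theorem \ref{cor1_kouvaritakis}, which shows the optimal value function $V(\cdot)$ of \eqref{oc_kouvaritakis}, characterized in Proposition \ref{prop3}, is an ISS-Lyapunov function verifying the hypotheses of Lemma \ref{lemiss2}. Since the conditions of Lemma \ref{lemiss2} imply those of Lemma \ref{lemiss} (as noted after Assumption \ref{lyapunov}), Assumption \ref{lyapunov} holds for \eqref{dynamics_kouvaritakis}.

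The genuinely non-routine part is the verification of Assumption \ref{terminalass}, and specifically two structural facts about ${\bf X}_f$. First, I must confirm that ${\bf X}_f$ contains the origin in its interior: near $x=0$ the probabilistic constraints \eqref{kouvaritakis_cons} are slack, so the unconstrained minimizer $\vec{c}=0$ (equivalently $u=Kx$) is feasible and optimal, placing a full neighbourhood of the origin inside ${\bf X}_f$. Second, controllability of $(A+BK,D)$ is not automatic and must be posited as a hypothesis; it is what later guarantees, through the terminal invariant measure, the non-empty-interior support condition used in Proposition \ref{prop1}(i). Everything else is assembly of the recursive-feasibility and value-function results of \cite{kouvaritakisetal2013} already summarized in Proposition \ref{prop3} and Theorem \ref{cor1_kouvaritakis}.
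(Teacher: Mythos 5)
Your proposal follows essentially the same route as the paper: Assumption \ref{distass} from the standing conditions below \eqref{lindyn}, Assumption \ref{terminalass} via the terminal set $\{x: c_{0|k}^*(x)=0\}$ whose invariance comes from the recursive-feasibility property of \cite{kouvaritakisetal2013} and whose boundedness comes from Assumption \ref{assboundedstriped}, and Assumption \ref{lyapunov} from Theorem \ref{cor1_kouvaritakis}. Where you go beyond the paper is in checking the fine print of Assumption \ref{terminalass}: you verify that the origin lies in the interior of ${\bf X}_f$, that $A+BK$ is Schur stable, and --- most importantly --- you flag that controllability of $(A+BK,D)$ does not come for free and must be added as an explicit hypothesis, analogous to Assumption \ref{ass_final_goulart}. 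The paper's own proof simply asserts ``which implies that Assumption \ref{terminalass} holds'' without ever addressing controllability of $(A+BK,D)$ for this second formulation, so your version is actually the more complete argument; the only cost is that you must carry the extra controllability hypothesis explicitly, which the paper leaves implicit.
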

\begin{proof}
Assumption \ref{distass} holds according to the conditions below~\eqref{lindyn}.
Furthermore, it is shown in \cite{kouvaritakisetal2013} that the constraints of \eqref{oc_kouvaritakis} are inactive whenever $x_k$ is sufficiently close to the origin. 
Hence there is a set ${\bf X}_f\subseteq{\bf X}$ such that
$c^\ast_{k}(x_k) = 0$ for $x_k\in{\bf X}_f$, and the recursion of feasibility of \eqref{oc_kouvaritakis} then implies that $c^\ast_{k+1}(x_{k+1}) =0$, so $x_{k+1}\in{\bf X}_f$ and hence ${\bf X}_f$ is invariant for \eqref{eq:goulart_dynamics}. Boundedness of ${\bf X}$ ensures that ${\bf X}_f$ is bounded, which implies that Assumption \ref{terminalass} holds.
Finally, Assumption \ref{lyapunov} holds due to Theorem \ref{cor1_kouvaritakis}.
\end{proof}

This allows us to state the following convergence results.

\begin{thm}
For any $x_0\in{\bf X}$, the closed-loop system \eqref{dynamics_kouvaritakis} satisfies
$\lim_{k\rightarrow\infty}{\mathbb P}\{ x_k\in \bigoplus_{j=0}^\infty (A+BK)^jD{\bf W}\} = 1$ and
\[
\lim_{k\rightarrow\infty}\frac{1}{k}\sum_{j=1}^k
(x_j^{\top\!} Q x_j + u_j^{\!\top\!} R u_j)
\stackrel{a.s.}{=} 
\lim_{k\rightarrow\infty} {\mathbb E}\bigl\{\xi_k^{\top\!}(Q+K^{\!\top\!\!} RK)\xi_k\bigr\}
\]
where $\xi_{k+1} = (A+BK)\xi_k + D w_k$ for all $k\in\mathbb{N}$ and $\xi_0 = x_0$.
\end{thm}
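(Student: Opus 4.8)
The plan is to reduce this theorem to the general machinery of Section~\ref{sec:results}, exactly as was done for Theorem~\ref{final_goulart}. First I would invoke Proposition~\ref{asspropkouva}, which has already established that the closed-loop system~\eqref{dynamics_kouvaritakis} satisfies Assumptions~\ref{distass}, \ref{terminalass} and~\ref{lyapunov}, with terminal-mode dynamics governed by the Schur-stable matrix $A+BK$ and with $(A+BK,D)$ controllable. This immediately places us in the hypotheses of Theorem~\ref{mainthmgeneral} and Corollary~\ref{maincor}.

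The first claim then follows directly from Corollary~\ref{maincor}: applying it with the terminal matrix $A+BK$ in the role of $A$ yields $\lim_{k\to\infty}{\mathbb P}\{x_k\in{\bf X}_\infty\}=1$ with ${\bf X}_\infty=\bigoplus_{j=0}^\infty(A+BK)^jD{\bf W}$, which is precisely the stated convergence in probability to the minimal invariant set of the terminal linear recursion.

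For the time-average performance I would apply the Law of Large Numbers~\eqref{LLN} of Theorem~\ref{mainthmgeneral} to the state function $h(x)\defeq x^\top Q x + u^\top R u$, where $u=c^*_{0|k}(x)+Kx$ is the closed-loop control law. Since the control input is a deterministic function of the state, $h$ is a well-defined measurable map on ${\bf X}$, and it is bounded because ${\bf X}$ is bounded by Assumption~\ref{assboundedstriped}; hence ${\mathbb E}_\pi(\lvert h\rvert)<\infty$ and~\eqref{LLN} gives that the time-average on the left converges almost surely to ${\mathbb E}_\pi\{h(x)\}$.

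The one point requiring care is the identification of the right-hand side, and I expect this to be the main (though mild) obstacle. The invariant measure $\pi$ is supported on ${\bf X}_\infty\subseteq{\bf X}_f$, the terminal set on which $c^*_{0|k}=0$ and hence $u=Kx$; therefore $h$ reduces $\pi$-almost everywhere to the quadratic $x^\top(Q+K^\top RK)x$, giving ${\mathbb E}_\pi\{h(x)\}={\mathbb E}_\pi\{x^\top(Q+K^\top RK)x\}$. Finally, since the terminal recursion $\xi_{k+1}=(A+BK)\xi_k+Dw_k$ converges in distribution to $\pi$ (as noted following Assumption~\ref{terminalass}) while the supports of $\xi_k$ remain uniformly bounded (because $A+BK$ is Schur stable and ${\bf W}$ is bounded, so $(A+BK)^kx_0$ decays and the Minkowski sums stay within a fixed bounded region), the bounded quadratic integrand transfers convergence of the distributions to convergence of the moments. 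Hence ${\mathbb E}_\pi\{x^\top(Q+K^\top RK)x\}=\lim_{k\to\infty}{\mathbb E}\{\xi_k^\top(Q+K^\top RK)\xi_k\}$, and combining this identification with the almost-sure limit above establishes the stated equality.
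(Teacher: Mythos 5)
Your proposal is correct and follows essentially the same route as the paper, which simply invokes Proposition~\ref{asspropkouva} to verify Assumptions~\ref{distass}, \ref{terminalass} and~\ref{lyapunov} and then appeals to Theorem~\ref{mainthmgeneral} and Corollary~\ref{maincor} (the paper's proof is literally ``identical to the proof of Theorem~\ref{final_goulart}''). The only difference is that you spell out the identification of the LLN limit ${\mathbb E}_\pi\{h(x)\}$ with $\lim_{k\to\infty}{\mathbb E}\{\xi_k^\top(Q+K^\top RK)\xi_k\}$, a step the paper leaves implicit; your argument for it is sound.
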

\begin{proof} Identical to the proof of Theorem~\ref{final_goulart}.
\end{proof}

\section{Conclusions}\label{sec:conclusions}

This paper extends and generalizes methods for analysing the convergence of stochastic MPC laws. We provide a set of results applying to general Markov chains to characterize the asymptotic distribution of the system state for the case of control laws that result in linear dynamics within a robustly invariant terminal set of states containing the origin. 
The paper demonstrates that the asymptotic time average of a function of the closed-loop system state is equal to the time average associated with the linear dynamics on the terminal set. These results are obtained using the ISS property of the control law, but the limit directly implied by the ISS Lyapunov inequality would yield a worse bound.
We illustrate the use of the convergence analysis by applying it to two stochastic MPC strategies. Future work will seek to obtain similar results but removing the condition that the pair $(A,D)$ is controllable. 

\bibliographystyle{ieeetr}
\bibliography{diego_library}

\end{document}